\newtheorem{corollary}{Corollary}
\newtheorem{lemma}{Lemma}
\newtheorem{theorem}{Theorem}
\title{\LARGE \bf
On the Location of the Minimizer of the Sum of\\ Two Strongly Convex Functions 
}
\author{Kananart Kuwaranancharoen and Shreyas Sundaram 
\thanks{This research was supported by NSF CAREER award
1653648.  The authors are with the School of Electrical and Computer Engineering at Purdue University.  Email: {\tt \{kkuwaran,sundara2\}@purdue.edu}.}
}
\begin{document}

\maketitle
\thispagestyle{empty}
\pagestyle{empty}

\begin{abstract}

The problem of finding the minimizer of a sum of convex functions is central to the field of distributed optimization.  Thus, it is of interest to understand how that minimizer is related to the properties of the individual functions in the sum. In this paper, we provide an upper bound on the region containing the minimizer of the sum of two strongly convex functions.   We consider two scenarios with different constraints on the upper bound of the gradients of the functions. In the first scenario, the gradient constraint is imposed on the location of the potential minimizer, while in the second scenario, the gradient constraint is imposed on a given convex set in which the minimizers of two original functions are embedded. We characterize the boundaries of the regions containing the minimizer in both scenarios. 

\end{abstract}

\section{Introduction}
The problem of distributed optimization arises in a variety of applications, including machine learning \cite{Ma09, Shalev11, Boyd11ADMM, Sayed14}, control of large-scale systems \cite{DistOptGrid14, Li2011optimal}, and cooperative robotic systems \cite{Schwager09,hosseini2013online, Montijano14,hosseini2014online,Zhu15}. In such problems, each node in a network has access to a local convex function (e.g., representing certain data available at that node), and all nodes are required to calculate the minimizer of the sum of the local functions.  There is a significant literature on distributed algorithms that allow the nodes to achieve this objective \cite{JNT-DPB-MA:86, Nedic10constrained,BJ-MR-MJ:09,MZ-SM:12,JW-NE:11, BG-JC:14-tac, AN-AO:15-tac}.  The local functions in the above settings are typically assumed to be private to the nodes.  However, there are certain common assumptions that are made about the characteristics of such functions, including strong convexity and bounds on the gradients (e.g., due to minimization over a convex set).  

In certain applications, it may be of interest to determine a region where the minimizer of the sum of the functions can be located, given only the minimizers of the local functions, their strong convexity parameters, and the bound on their gradients (either at the minimizer or at the boundaries of a convex constraint set).  For example, when the network contains malicious nodes that do not follow the distributed optimization algorithm, one cannot guarantee that all nodes calculate the true minimizer.  Instead, one must settle for algorithms that allow the non-malicious nodes to converge to a certain region \cite{sundaram2016secure,LS-NV:15}.  In such situations, knowing the region where the minimizer can lie would allow us to evaluate the efficacy of such resilient distributed optimization algorithms.  Similarly, suppose that the true functions at some (or all) nodes are not known (e.g., due to noisy data, or if the nodes obfuscate their functions due to privacy concerns).  A key question in such scenarios is to determine how far the minimizer of the sum of the true functions can be from the minimizer calculated from the noisy (or obfuscated) functions.  The region containing all possible minimizers of the sum of functions (calculated using only their local minimizers, convexity parameters, and bound on the gradients) would provide the answer to this question.  

When the local functions $f_i$ at each node $v_i$ are single dimensional (i.e., $f_i: \mathbb{R} \rightarrow \mathbb{R}$), and strongly convex, it is easy to see that the minimizer of the sum of functions must be in the interval bracketed by the smallest and largest minimizers of the local functions.  This is because the gradients of all the functions will have the same sign outside that region, and thus cannot sum to zero.  However, a similar characterization of the region containing the minimizer of multidimensional functions is lacking in the literature, and is significantly more challenging to obtain.  For example, the conjecture that the minimizer of a sum of convex functions is in the convex hull of their local minimizers can be easily disproved via simple examples; consider $f_1(x,y) = x^2 - xy + \frac{1}{2} y^2$ and $f_2(x,y) = x^2 + xy + \frac{1}{2} y^2 -4x -2y$ with minimizers $(0,0)$ and $(2,0)$ respectively, whose sum has minimizer $(1, 1)$. Thus, in this paper, {\bf our goal is to take a step toward characterizing the region containing the minimizer of a sum of strongly convex functions.}  Specifically, we focus on characterizing this region for the sum of {\it two} strongly convex functions under various assumptions on their gradients (as described in the next section).  As we will see, the analysis is significantly complicated even for this scenario.  Nevertheless, we obtain such a region and gain insights that could potentially be leveraged in future work to tackle the sum of multiple functions.

\section{Notation and Preliminaries}

Sets: We denote the closure and interior of a set $\mathcal{E}$ by $\bar{\mathcal{E}}$ and ${\mathcal{E}}^\circ$, respectively. The boundary of a set $\mathcal{E}$ defined as $\partial \mathcal{E} = \bar{\mathcal{E}} \setminus {\mathcal{E}}^\circ$. 

Linear Algebra: We denote by $\mathbb{R}^n$ the $n$-dimensional Euclidean space. For simplicity, we often use $(x_1, \ldots, x_n)$ to represent the column vector 
$\begin{bmatrix}
x_1 & x_2 & \ldots & x_n
\end{bmatrix}^T$. We use $e_i$ to denote the $i$-th basis vector (the vector of all zeros except for a one in the $i$-th position). We denote by $\Vert \cdot \Vert$ the Euclidean norm $\lVert x \rVert:=(\sum_i x_i^2)^{1/2}$ and by $\angle (u,v)$ the angle between vectors $u$ and $v$. Note that $\angle (u,v) = \arccos \big( \frac{u^T v}{\Vert u \Vert \Vert v \Vert}  \big)$. We use $\mathcal{B}_{r}(x_0) = \{ x \in \mathbb{R}^n: \Vert x-x_0 \Vert < r  \}$ and $\bar{\mathcal{B}}_{r}(x_0)$ to denote the open and closed ball, respectively, centered at $x_0$ of radius $r$.

Convex Sets and Functions: A set $\mathcal{C}$ in $\mathbb{R}^n$ is said to be convex if, for all $x$ and $y$ in $\mathcal{C}$ and all $t$ in the interval $(0, 1)$, the point $(1- t)x + ty$ also belongs to $\mathcal{C}$. A differentiable function $f$ is called strongly convex with parameter $\sigma > 0$ (or $\sigma$-strongly convex) if $(\nabla f(x) - \nabla f(y))^T (x-y) \geq \sigma \Vert x-y \Vert^2$ holds for all points $x, y$ in its domain. We denote the set of all $\sigma$-strongly convex functions by $\mathcal{S}(\sigma)$.

\section{Problem Statement}

We will consider two scenarios in this paper.  We first consider constraints on the gradients of the local functions at the location of the potential minimizer, and then consider constraints on the gradients inside a convex constraint set.

\subsection{Problem 1}
Consider two strongly convex functions $f_1: \mathbb{R}^n \rightarrow \mathbb{R}$ and $f_2: \mathbb{R}^n \rightarrow \mathbb{R}$.  The two functions $f_1$ and $f_2$ have strong convexity parameters $\sigma_1$ and $\sigma_2$, respectively, and minimizers $x_1^*$ and $x_2^*$, respectively.  Let $x$ denote the minimizer of $f_1 + f_2$, and suppose that the norm of the gradients of $f_1$ and $f_2$ must be bounded above by a finite number $L$ at $x$.  Our goal is to estimate the region $\mathcal{M}$ containing all possible values $x$ satisfying the above conditions.  More specifically, we wish to estimate the region 
\begin{multline}
\mathcal{M}(x_1^*,x_2^*,\sigma_1,\sigma_2,L) \triangleq \{x \in \mathbb{R}^n : \exists f_1 \in \mathcal{S}(\sigma_1),  \\
\exists f_2 \in \mathcal{S}(\sigma_2), \;\; \nabla f_1(x_1^*)  = 0, \;\; \nabla f_2(x_2^*)  = 0, \;\;  \\
\nabla f_1(x) = -\nabla f_2(x), \;\; \Vert \nabla f_1(x) \Vert = \Vert \nabla f_2(x) \Vert \leq L \}. \label{set M}
\end{multline}
For simplicity of notation, we will omit the argument of the set $\mathcal{M}(x_1^*,x_2^*,\sigma_1,\sigma_2,L)$ and write it as $\mathcal{M}$ or $\mathcal{M}(x_1^*, x_2^*)$.

\subsection{Problem 2}
Consider two strongly convex functions $f_1: \mathbb{R}^n \rightarrow \mathbb{R}$ and $f_2: \mathbb{R}^n \rightarrow \mathbb{R}$.  The two functions $f_1$ and $f_2$ have strong convexity parameters $\sigma_1$ and $\sigma_2$, respectively, and minimizers $x_1^*$ and $x_2^*$, respectively.  Suppose that we also have a compact convex set $\mathcal{C} \subset \mathbb{R}^n$ containing the minimizers $x_1^*$ and $x_2^*$.  Let $x$ denote the minimizer of $f_1 + f_2$ within the region $\mathcal{C}$.  The norm of the gradients of both functions $f_1$ and $f_2$ is bounded above by a finite number $L$ everywhere in the set $\mathcal{C}$.  Our goal is to estimate the region $\mathcal{N}$ containing all possible values $x_0 \in \mathcal{C}$ satisfying the above conditions.  More specifically, define $\mathcal{F}(\sigma, L, \mathcal{C})$ to be the family of functions that are $\sigma$-strongly convex and whose gradient norm is upper bounded by $L$ everywhere inside the convex set $\mathcal{C}$:
\begin{equation*}
\mathcal{F}(\sigma, L, \mathcal{C}) \triangleq \{ f : f \in \mathcal{S}(\sigma), \; \Vert \nabla f(x) \Vert \leq L, \;  \forall x \in \mathcal{C} \}.
\end{equation*}
Then, we wish to characterize the region
\begin{multline}
\mathcal{N}(x_1^*,x_2^*,\sigma_1,\sigma_2,L) \triangleq \{x \in \mathbb{R}^n : \exists f_1 \in \mathcal{F}(\sigma_1, L, \mathcal{C}), \\ \exists f_2 \in \mathcal{F}(\sigma_2, L, \mathcal{C}), \quad \nabla f_1(x_1^*)  = 0,  \\ 
\quad \nabla f_2(x_2^*)  = 0, \quad \nabla f_1(x) = -\nabla f_2(x)  \}. \label{set N}
\end{multline}
For simplicity of notation, we will omit the argument of the set $\mathcal{N}(x_1^*,x_2^*,\sigma_1,\sigma_2,L)$ and write it as $\mathcal{N}$ or $\mathcal{N}(x_1^*, x_2^*)$.

\subsection{A Preview of the Solution}
We provide two examples of the region containing the minimizer of the sum of $2$-dimensional functions in both scenarios in Fig. \ref{fig:convex}, where $x_1^*$ and $x_2^*$ are the minimizers of $f_1$ and $f_2$, respectively; we derive these regions in the rest of the paper. Notice that the region containing set $\mathcal{M}$ (the area  bounded by the red line) is bigger than the region containing set $\mathcal{N}$ (the area bounded by the blue line). In addition, even though we have changed the shape of convex set in the two examples, the minimizer regions are similar.      

\begin{figure}
\centering
\includegraphics[width=0.45\textwidth]{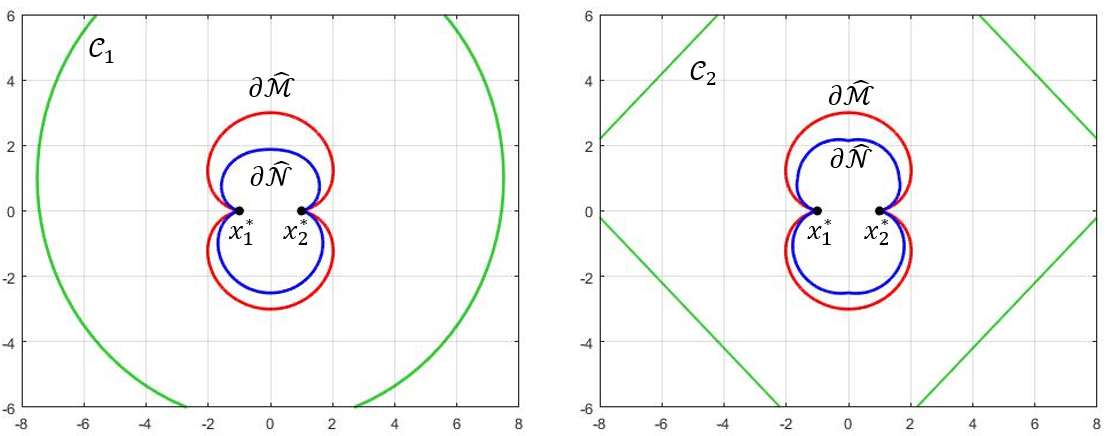}
\caption{\label{fig:convex}The red lines are the boundary of the region that contains $\mathcal{M}$, while the blue lines are the boundary of the region that contains $\mathcal{N}$, where convex sets $\mathcal{C}_1$ and $\mathcal{C}_2$ are a circle (Left) and a box (Right) respectively.}
\end{figure}


\section{Problem 1: Gradient Constraint at Location of Potential Minimizer}

In this section, we consider the first scenario when the gradient constraint is imposed on the location of the potential minimizer and derive an approximation to the set $\mathcal{M}$ in \eqref{set M}.  

Consider functions $f_1 \in \mathcal{S} (\sigma_1)$ with minimizer $x_1^*$ and $f_2 \in \mathcal{S} (\sigma_2)$ with minimizer $x_2^*$. Without loss of generality, we can assume $x_1^* = (-r,0, \ldots, 0) \in \mathbb{R}^{n}$ and $x_2^* = (r,0, \ldots, 0) \in \mathbb{R}^{n}$ for some $r \in \mathbb{R}_{> 0}$, since for any $x_1^*$ and $x_2^*$ such that $x_1^* \neq x_2^*$, we can find a unique affine transformation that maps the original minimizers into these values and also preserves the distance between these points i.e., $\Vert x_1^* - x_2^* \Vert = 2r$. The minimizer region in the original coordinates can then be obtained by applying the inverse transformation to the derived region.


We will be using the following functions throughout our analysis.  For $i \in \{1, 2\}$, define
\begin{equation}
 \tilde{\phi}_i(x, L) \triangleq \arccos \Big(\frac{\sigma_i}{L} \Vert x-x_i^* \Vert \Big),
 \label{eqn:phi_tilde}
 \end{equation}
 for all $x \in \mathbb{R}^{n}$ such that $\frac{\sigma_i}{L} \Vert x-x_i^* \Vert \le 1$. For simplicity of notation, if $L$ is a constant, we will omit the arguments and write it as $\tilde{\phi}_i(x)$ or $\tilde{\phi}_i$. Furthermore, for all $x \in \mathbb{R}^{n}$, define
 \begin{equation*}
 \psi(x) \triangleq \pi - \left(\alpha_2(x) - \alpha_1(x) \right), 
 \end{equation*}
 where $\alpha_i(x)$ is the angle between $x- x_i^*$ and $x_2^*-x_1^*$ i.e., $\alpha_i(x) \triangleq \angle ( x-x_i^*, x_2^* - x_1^* )$.

\begin{lemma}
  Necessary conditions for a point $x \in \mathbb{R}^n$ to be a minimizer of $f_1 + f_2$ when the gradients of $f_1$ and $f_2$ are bounded by $L$ at $x$ are (i) $\Vert x-x_i^* \Vert \leq \frac{L}{\sigma_i}$ for $i=1, 2$, and (ii) $\tilde{\phi}_1(x) + \tilde{\phi}_2(x) \geq \psi(x)$. \label{lem: angle}
\end{lemma}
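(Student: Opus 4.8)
The plan is to obtain both conditions from a single estimate on the angle between $\nabla f_i(x)$ and the displacement $x - x_i^*$, and then to combine the two estimates through a triangle inequality for angles. Throughout, I write $g := \nabla f_1(x)$; since $x$ minimizes $f_1 + f_2$, stationarity gives $\nabla f_1(x) + \nabla f_2(x) = 0$, so $\nabla f_2(x) = -g$.

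First I would exploit $\sigma_i$-strong convexity evaluated at the pair $(x, x_i^*)$. Because $\nabla f_i(x_i^*) = 0$, the defining inequality collapses to $\nabla f_i(x)^T (x - x_i^*) \ge \sigma_i \Vert x - x_i^* \Vert^2$. Pairing this with Cauchy--Schwarz and the gradient bound, $\nabla f_i(x)^T (x - x_i^*) \le \Vert \nabla f_i(x) \Vert \, \Vert x - x_i^* \Vert \le L \Vert x - x_i^* \Vert$, immediately yields $\sigma_i \Vert x - x_i^* \Vert^2 \le L \Vert x - x_i^* \Vert$, which is condition (i) and simultaneously certifies $\frac{\sigma_i}{L}\Vert x - x_i^* \Vert \le 1$, so that $\tilde{\phi}_i(x)$ is well defined. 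Dividing the same inequality by $\Vert \nabla f_i(x) \Vert \, \Vert x - x_i^* \Vert$ and using the gradient bound in the denominator gives $\cos \angle\big(\nabla f_i(x), x - x_i^*\big) \ge \frac{\sigma_i}{L} \Vert x - x_i^* \Vert$, i.e.
\begin{equation*}
\angle\big(\nabla f_i(x),\, x - x_i^*\big) \le \tilde{\phi}_i(x), \qquad i = 1, 2 .
\end{equation*}
This cone bound is the heart of the argument. Before proceeding I would note that the minimizer condition forces $g \neq 0$ and $x \notin \{x_1^*, x_2^*\}$: if $g = 0$ then the strong-convexity inequality forces $x = x_1^* = x_2^*$, contradicting $x_1^* \neq x_2^*$, and the same reasoning rules out $x = x_i^*$, so every angle below is well defined.

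Next I would convert the two cone bounds into a statement about the single vector $g$. The first reads $\angle(g, x - x_1^*) \le \tilde{\phi}_1$. The second, applied to $\nabla f_2(x) = -g$, reads $\angle(-g, x - x_2^*) \le \tilde{\phi}_2$, which is equivalent to $\angle(g, x - x_2^*) \ge \pi - \tilde{\phi}_2$ since reversing one argument sends an angle $\theta$ to $\pi - \theta$. Treating the normalized vectors as points on the unit sphere and using that the geodesic distance (equal to the angle) satisfies the triangle inequality, I would write $\angle(g, x - x_2^*) \le \angle(g, x - x_1^*) + \angle(x - x_1^*, x - x_2^*)$. Chaining the three relations gives $\pi - \tilde{\phi}_2 \le \tilde{\phi}_1 + \angle(x - x_1^*, x - x_2^*)$, hence $\tilde{\phi}_1 + \tilde{\phi}_2 \ge \pi - \angle(x - x_1^*, x - x_2^*)$.

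It remains to identify the right-hand side with $\psi(x)$, and this is the step I expect to require the most care. Working in the plane spanned by $x_1^*, x_2^*, x$ (any plane containing them if they are collinear) and using the normalization $x_1^* = (-r,0,\dots,0)$, $x_2^* = (r,0,\dots,0)$, I would show $\angle(x - x_1^*, x - x_2^*) = \alpha_2(x) - \alpha_1(x)$ by summing the interior angles of the triangle $x_1^* x_2^* x$: the angle at $x_1^*$ is $\alpha_1$, the angle at $x_2^*$ is $\pi - \alpha_2$ (because $x_1^* - x_2^*$ is antiparallel to $x_2^* - x_1^*$), and the angle at $x$ is $\angle(x - x_1^*, x - x_2^*)$, so the three sum to $\pi$. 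The bookkeeping subtlety is the orientation of the unsigned angles $\alpha_i \in [0,\pi]$; a clean way to handle it is to reduce by symmetry to $x$ in the closed upper half-plane, where $\alpha_2 \ge \alpha_1$ and the identity holds directly, and to check the collinear cases separately (there condition (ii) is either trivial or tight). Substituting $\angle(x - x_1^*, x - x_2^*) = \alpha_2 - \alpha_1$ turns the previous display into $\tilde{\phi}_1 + \tilde{\phi}_2 \ge \pi - (\alpha_2 - \alpha_1) = \psi(x)$, which is condition (ii) and completes the proof.
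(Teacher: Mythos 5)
Your proposal is correct and takes essentially the same route as the paper: strong convexity together with the gradient bound yields the cone condition $\angle(\nabla f_i(x), x-x_i^*) \le \tilde{\phi}_i(x)$ (and condition (i) en route), and condition (ii) then expresses that the two cones about $u_1(x)$ and $-u_2(x)$, whose axes subtend the angle $\psi(x)$, must share a common direction $g = \nabla f_1(x) = -\nabla f_2(x)$. The only difference is presentational rather than substantive: where the paper justifies the cone-overlap step pictorially via the sets $\mathcal{G}_1(x)$ and $\mathcal{G}_2(x)$ (Fig.~\ref{fig:angle2}), you prove the needed direction rigorously with the triangle inequality for angular distance on the sphere, and you verify the identity $\angle(x-x_1^*, x-x_2^*) = \alpha_2(x) - \alpha_1(x)$ (including the collinear degenerate cases) by an angle-sum argument that the paper leaves implicit in its definitions.
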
 

\begin{proof}
From the definition of strongly convex functions, 
\begin{equation*}
(\nabla f_i(x) - \nabla f_i(y))^T (x-y) \geq \sigma_i \Vert x-y \Vert^2
\end{equation*}
for all $x, y$ and for $i=1, 2$. Since $x_1^*$ and $x_2^*$ are the minimizers of $f_1$ and  $f_2$ respectively, we get 
\begin{align}
(\nabla f_i(x) - \nabla f_i(x_i^*))^T (x-x_i^*) &\geq \sigma_i \Vert x-x_i^* \Vert^2 \nonumber\\
\Rightarrow \quad \nabla f_i(x) ^T \frac{x-x_i^*}{\Vert x-x_i^* \Vert} &\geq \sigma_i \Vert x-x_i^* \Vert \geq 0. \label{strongly convex 1}
\end{align}
Let $u_i(x) \triangleq \frac{x-x_i^*}{\Vert x-x_i^* \Vert}$ be the unit vector in the direction of $x-x_i^*$ and $\phi_i(x) \triangleq \angle ( \nabla f_i(x), u_i(x) )$, with $0 \leq \phi_i(x) \leq \frac{\pi}{2}$ as shown in Fig. \ref{fig:angle1}. From \eqref{strongly convex 1}, we get
\begin{equation*}
\nabla f_i(x) ^T u_i(x) = \Vert \nabla f_i(x) \Vert \cos(\phi_i(x))\geq \sigma_i \Vert x-x_i^* \Vert. 
\end{equation*}
If $x$ is a candidate minimizer then we can apply the gradient norm constraint $\Vert \nabla f_i(x) \Vert \leq L$ to the above inequality to obtain
\begin{align}
\cos(\phi_i(x)) &\geq \frac{ \sigma_i}{L} \Vert x-x_i^* \Vert. \label{angle eq 1}
\end{align}
If $\frac{ \sigma_i}{L} \Vert x-x_i^* \Vert \leq 1$ then $\phi_i(x) \leq \arccos (\frac{ \sigma_i}{L} \Vert x-x_i^* \Vert)$. On the other hand, if $\frac{ \sigma_i}{L} \Vert x-x_i^* \Vert > 1$ then there is no $\phi_i(x)$ that can satisfy the inequality \eqref{angle eq 1}. Therefore, if $\Vert x - x_1^* \Vert > \frac{L}{\sigma_1}$ or $\Vert x - x_2^* \Vert > \frac{L}{\sigma_2}$, we conclude that $x$ cannot be the minimizer of the function $f_1 + f_2$.

\begin{figure}
\centering
\includegraphics[width=0.25\textwidth]{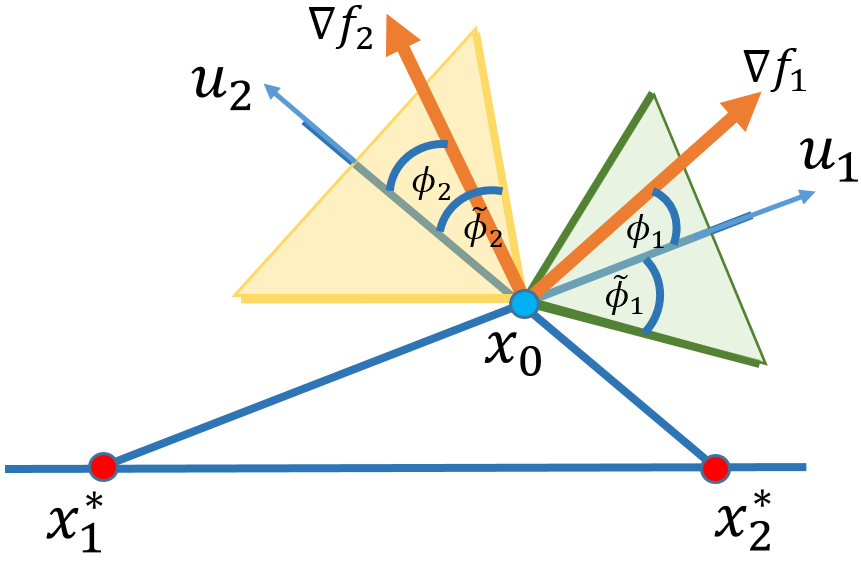}
\caption{The quantities $\phi_i(x_0)$ represent the angles between $\nabla f_i(x_0)$ and $u_i(x_0)$.  The quantities $\tilde{\phi}_i(x_0)$ represent the maximum possible values for $\phi_i(x_0)$ in order for $x_0$ to be a minimizer.  In other words, the angles $\phi_1(x_0)$ and $\phi_2(x_0)$ must lie in the shaded regions.}
\label{fig:angle1}
\end{figure}

Suppose that $\frac{\sigma_i}{L} \Vert x - x_i^* \Vert \leq 1$ for $i=1, 2$ so that $\arccos (\frac{ \sigma_1}{L} \Vert x-x_1^* \Vert)$ and $\arccos (\frac{ \sigma_2}{L} \Vert x - x_2^* \Vert)$ are well-defined. In order to capture the possible gradient of $f_1$ at point $x$, define a set of vectors whose norms are at most $L$ and satisfy \eqref{angle eq 1}:
\begin{multline*}
\mathcal{G}_1(x) \triangleq \Big\{g \in \mathbb{R}^{n} : \Vert g \Vert \leq L, \\
\angle (g,u_1(x)) \leq \arccos \Big(\frac{ \sigma_1}{L} \Vert x-x_1^* \Vert \Big) \Big\}.
\end{multline*}
Since $x$ can be the minimizer of the function $f_1 + f_2$ only when $\nabla f_1(x) =  -\nabla f_2(x)$, we define a set of vectors whose norms are at most $L$ and satisfy \eqref{angle eq 1} to capture the possible negated gradient vectors of $f_2$:   
\begin{multline*}
\mathcal{G}_2(x) \triangleq \Big\{g \in \mathbb{R}^{n} : \Vert g \Vert \leq L, \\
\angle (-g,u_2(x)) \leq \arccos \Big(\frac{ \sigma_2}{L} \Vert x-x_2^* \Vert \Big) \Big\}.
\end{multline*}
Note that $\phi_2(x)$ can be viewed geometrically as the angle between $-\nabla f_2(x)$ and $-u_2(x)$ as shown in Fig. \ref{fig:angle1}.  If $\mathcal{G}_1(x) \cap \mathcal{G}_2(x) = \emptyset$, then $x$ cannot be the minimizer of  the function $f_1 + f_2$ because it is not possible to choose $f_1$ and $f_2$ such that $\nabla f_i(x)$ satisfy inequality \eqref{angle eq 1} for $i=1, 2$ and $\nabla f_1(x) = -\nabla f_2(x)$ simultaneously.

\begin{figure}
\centering
\includegraphics[width=0.25\textwidth]{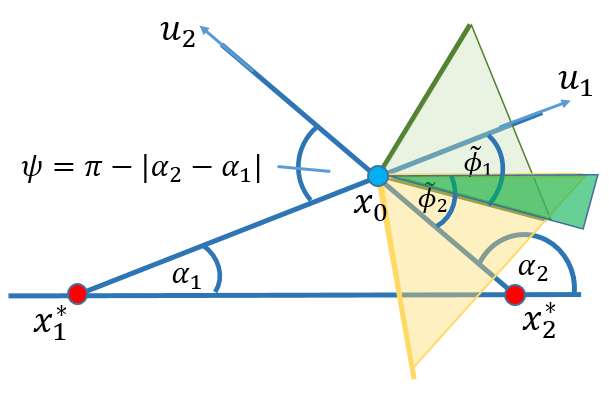}
\caption{The green region in the figure is the set $\mathcal{G}_1(x_0)$ and the yellow region is the set $\mathcal{G}_2(x_0)$.  These regions are defined by the angles $\tilde{\phi}_1$ and $\tilde{\phi}_2$. If these regions overlap, the point $x_0$ is a minimizer candidate.}
\label{fig:angle2}
\end{figure}

Recall that $\alpha_i(x) = \angle ( u_i(x), x_2^*-x_1^*)$ with $0 \leq \alpha_i(x) \leq \pi$ for $i=1, 2$, i.e.,
$
\alpha_i(x) = \arccos \Big( u_i(x)^T \frac{(x_2^*-x_1^*)}{\Vert x_2^*-x_1^* \Vert} \Big).
$
Note that $\alpha_2(x) \geq \alpha_1(x)$ due to the definition of $\alpha_i$. Then, the angle between $u_1(x)$ and $u_2(x)$ is $\alpha_2(x) - \alpha_1(x)$. Therefore, the angle between $u_1(x)$ and $-u_2(x)$ is equal to $\psi(x) = \pi - (\alpha_2(x) - \alpha_1(x) )$.

Let $\tilde{\phi}_i(x)$ be the maximum angle of $\phi_i(x)$ that satisfies inequality \eqref{angle eq 1}, i.e., as given by \eqref{eqn:phi_tilde}.
By the definition of $\tilde{\phi}_i(x)$, if $\tilde{\phi}_1(x) + \tilde{\phi}_2(x) \geq \psi(x)$, there is an overlapping region caused by $\tilde{\phi}_1(x)$ and $\tilde{\phi}_2(x)$ as shown in Fig. \ref{fig:angle2} and there exist gradients $\nabla f_1(x) \in \mathcal{G}_1(x)$ and $- \nabla f_2(x) \in \mathcal{G}_2(x)$ such that $\nabla f_1(x) = - \nabla f_2(x)$. On the other hand, if $\tilde{\phi}_1(x) + \tilde{\phi}_2(x) < \psi(x)$ then $\mathcal{G}_1(x) \cap \mathcal{G}_2(x) = \emptyset$  and it is not possible to choose gradients $\nabla f_1(x) \in \mathcal{G}_1(x)$ and $- \nabla f_2(x) \in \mathcal{G}_2(x)$ such that they cancel each other. In this case, we can conclude that this $x$ cannot be the minimizer of the function $f_1 + f_2$.
\end{proof}

Note that angles $\tilde{\phi}_1 (x)$, $\tilde{\phi}_2 (x)$, $\alpha_1(x)$, and $\alpha_2(x)$ can be expressed as a function of $\Vert x_1^* - x_2^* \Vert$, $\Vert x - x_1^* \Vert$, and $\Vert x - x_2^* \Vert$. 
Thus, from the proof of Lemma~\ref{lem: angle}, the inequality $\tilde{\phi}_1(x) + \tilde{\phi}_2(x) \geq \psi(x)$ depends only on the distance between the three points $x_1^*$, $x_2^*$, and $x$. Therefore, the candidate minimizer property of $x$ can be fully described by the 2-D picture in Fig. \ref{fig:angle2}.

Now we consider the relationship between set $\mathcal{M}$ in \eqref{set M} (which is the set that we want to identify) and certain other sets which we define below. Define the set
\begin{multline}
\hat{\mathcal{M}}(x_1^*,x_2^*) \triangleq \Big\{ x \in \mathbb{R}^n: \tilde{\phi}_1(x) + \tilde{\phi}_2(x) \geq \psi(x), \\
\Vert x-x_1^* \Vert \leq \frac{L}{\sigma_1}, \quad \Vert x-x_2^* \Vert \leq \frac{L}{\sigma_2}  \Big \}. \label{set: M_hat}
\end{multline}
Note that based on Lemma~\ref{lem: angle}, $\hat{\mathcal{M}}$ contains the minimizers of $f_1 + f_2$.

Define $\mathcal{H}$ to be the set of points such that there exist strongly convex functions (with given strong convexity parameters and minimizers) whose gradients can be bounded by $L$ at those points:
\begin{multline}
\mathcal{H}(x_1^*,x_2^*) \triangleq \{x \in \mathbb{R}^n : \exists f_1 \in \mathcal{S}(\sigma_1), \\
\exists f_2 \in \mathcal{S}(\sigma_2), \;\; \nabla f_1(x_1^*)  = 0, \;\; \nabla f_2(x_2^*)  = 0,  \\
\Vert \nabla f_1(x) \Vert \leq L, \;\; \Vert \nabla f_2(x) \Vert \leq L  \}. \label{set: H}
\end{multline}

Define $\mathcal{H}_i$ to be the set of points such that there exists a $\sigma_i$-strongly convex function $f_i$ with minimizer $x_i^*$ whose gradient is bounded by $L$ at those points: 
\begin{multline*}
\mathcal{H}_i(x_i^*) \triangleq \{x \in \mathbb{R}^n : \exists f_i \in \mathcal{S}(\sigma_i), \\ 
\nabla f_i(x_i^*)  = 0, \;  \Vert \nabla f_i(x) \Vert \leq L \}, \enspace i = 1, 2. 
\end{multline*}

\begin{lemma}
$\mathcal{M}(x_1^*,x_2^*) \subseteq \hat{\mathcal{M}}(x_1^*,x_2^*) \subseteq \mathcal{H}(x_1^*,x_2^*) $ and $\mathcal{H}(x_1^*,x_2^*) = \bar{\mathcal{B}}_{\frac{L}{\sigma_1}}(x_1^*) \cap \bar{\mathcal{B}}_{\frac{L}{\sigma_2}}(x_2^*)$.
\label{lem:nested_sets}
\end{lemma}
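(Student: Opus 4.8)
The plan is to decompose the statement into its three constituent claims and handle them in the order that makes the dependencies cleanest: first the equality $\mathcal{H} = \bar{\mathcal{B}}_{L/\sigma_1}(x_1^*) \cap \bar{\mathcal{B}}_{L/\sigma_2}(x_2^*)$, then the two inclusions $\mathcal{M} \subseteq \hat{\mathcal{M}}$ and $\hat{\mathcal{M}} \subseteq \mathcal{H}$, since the latter inclusion will fall out of the equality almost for free. The single structural idea that drives everything is the \emph{extremal quadratic} $f_i(y) = \tfrac{\sigma_i}{2}\Vert y - x_i^* \Vert^2$, which is $\sigma_i$-strongly convex, has minimizer $x_i^*$, and attains gradient norm $\Vert \nabla f_i(x) \Vert = \sigma_i \Vert x - x_i^* \Vert$ exactly; this is the witness that makes the ``existential'' sets nonempty at precisely the right points.

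For the equality, I would prove the two inclusions separately. The direction $\bar{\mathcal{B}}_{L/\sigma_1}(x_1^*) \cap \bar{\mathcal{B}}_{L/\sigma_2}(x_2^*) \subseteq \mathcal{H}$ is the constructive one: given $x$ with $\Vert x - x_i^* \Vert \le L/\sigma_i$ for $i = 1,2$, exhibit the quadratics above, for which $\Vert \nabla f_i(x) \Vert = \sigma_i \Vert x - x_i^* \Vert \le L$, so all defining conditions of $\mathcal{H}$ in \eqref{set: H} hold and $x \in \mathcal{H}$. The reverse direction $\mathcal{H} \subseteq \bar{\mathcal{B}}_{L/\sigma_1}(x_1^*) \cap \bar{\mathcal{B}}_{L/\sigma_2}(x_2^*)$ reuses the computation already performed in the proof of Lemma~\ref{lem: angle}: for any $x \in \mathcal{H}$ with witnessing functions $f_i$, strong convexity together with $\nabla f_i(x_i^*) = 0$ gives $\nabla f_i(x)^T (x - x_i^*) \ge \sigma_i \Vert x - x_i^* \Vert^2$, while Cauchy--Schwarz and the gradient bound give $\nabla f_i(x)^T (x - x_i^*) \le \Vert \nabla f_i(x) \Vert \, \Vert x - x_i^* \Vert \le L \Vert x - x_i^* \Vert$. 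Chaining these and dividing by $\Vert x - x_i^* \Vert$ (the case $x = x_i^*$ being trivial) yields $\Vert x - x_i^* \Vert \le L/\sigma_i$ for each $i$, which is membership in both balls.

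The remaining two inclusions are then immediate. For $\mathcal{M} \subseteq \hat{\mathcal{M}}$, I would simply invoke Lemma~\ref{lem: angle}: any $x \in \mathcal{M}$ is, by the definition \eqref{set M}, a point at which suitable $f_1, f_2$ have cancelling gradients of norm at most $L$, i.e.\ a minimizer of $f_1 + f_2$ under the gradient bound; Lemma~\ref{lem: angle} states that such an $x$ must satisfy both the ball conditions and the angle inequality $\tilde{\phi}_1(x) + \tilde{\phi}_2(x) \ge \psi(x)$, which are exactly the defining conditions of $\hat{\mathcal{M}}$ in \eqref{set: M_hat}. For $\hat{\mathcal{M}} \subseteq \mathcal{H}$, observe that the definition of $\hat{\mathcal{M}}$ already enforces $\Vert x - x_i^* \Vert \le L/\sigma_i$ for $i = 1, 2$, so $\hat{\mathcal{M}} \subseteq \bar{\mathcal{B}}_{L/\sigma_1}(x_1^*) \cap \bar{\mathcal{B}}_{L/\sigma_2}(x_2^*)$, and by the equality just established this intersection equals $\mathcal{H}$; the angle inequality is simply discarded here, which is the conceptually important point that $\mathcal{H}$ drops the gradient-cancellation requirement.

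I expect no genuinely hard step; the only place demanding care is the reverse inclusion of the $\mathcal{H}$ equality, where one must be explicit that the chained inequality combines strong convexity with Cauchy--Schwarz and must separately dispose of the degenerate point $x = x_i^*$ before dividing. The rest is bookkeeping: recognizing that the extremal quadratic realizes the boundary gradient norm and that the defining conditions of $\hat{\mathcal{M}}$ and $\mathcal{H}$ line up with the ball constraints.
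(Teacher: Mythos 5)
Your proof is correct and takes essentially the same route as the paper's: quadratic witnesses establish $\bar{\mathcal{B}}_{\frac{L}{\sigma_1}}(x_1^*) \cap \bar{\mathcal{B}}_{\frac{L}{\sigma_2}}(x_2^*) \subseteq \mathcal{H}$, strong convexity plus the gradient bound (your Cauchy--Schwarz step is exactly the paper's $\cos(\phi_i(x)) \leq 1$) gives the reverse inclusion, Lemma~\ref{lem: angle} gives $\mathcal{M} \subseteq \hat{\mathcal{M}}$, and matching the ball conditions gives $\hat{\mathcal{M}} \subseteq \mathcal{H}$. The only deviations are cosmetic and in your favor: your witness $f_i(y) = \tfrac{\sigma_i}{2}\Vert y - x_i^*\Vert^2$ remains valid even at $x = x_i^*$, where the paper's choice $\hat{\sigma}_i = \frac{L}{\Vert \hat{x} - x_i^*\Vert}$ is undefined, and you explicitly handle the degenerate case before dividing by $\Vert x - x_i^*\Vert$, which the paper glosses over.
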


\begin{proof}
From Lemma~\ref{lem: angle}, we get $\mathcal{M}(x_1^*,x_2^*) \subseteq \hat{\mathcal{M}}(x_1^*,x_2^*)$. From the definition of a strongly convex function,
\begin{equation*}
(\nabla f_i(x) - \nabla f_i(y))^T (x-y) \geq \sigma_i \Vert x-y \Vert^2
\end{equation*}
for all $x, y$ where $i = 1, 2$. Substitute $x_i^*$ into $y$ to get
\begin{align}
(\nabla f_i(x) - \nabla f_i(x_i^*))^T (x-x_i^*) &\geq \sigma_i \Vert x-x_i^* \Vert^2 \nonumber\\
\Leftrightarrow \Vert \nabla f_i(x) \Vert \Vert x-x_i^* \Vert \cos (\phi_i(x)) &\geq \sigma_i \Vert x-x_i^* \Vert^2 \nonumber\\
\Rightarrow \quad L &\geq \sigma_i \Vert x-x_i^* \Vert \nonumber\\
\Leftrightarrow \quad \Vert x-x_i^* \Vert &\leq \frac{L}{\sigma_i} \label{max dist}
\end{align}
where the equality $\Vert \nabla f_i(x) \Vert \cos (\phi_i(x)) = L$ occurs when $\nabla f_i(x)$ is chosen such that $\Vert \nabla f_i(x) \Vert = L$ and $\nabla f_i(x)^T u_i(x) = L$. Note that the above sequence of inequalities uses the fact that $\Vert \nabla f_i(x) \Vert \leq L$ and $0 \leq \cos (\phi_i(x)) \leq 1$. Since $\bar{\mathcal{B}}_{r}(x_0) = \{ x: \Vert x-x_0 \Vert \leq r  \}$, from \eqref{max dist}, we have $\mathcal{H}_i(x_i^*) \subseteq \bar{\mathcal{B}}_{\frac{L}{\sigma_i}}(x_i^*)$. 

For the converse, consider $\hat{x} \in \bar{\mathcal{B}}_{\frac{L}{\sigma_i}}(x_i^*)$. By choosing a quadratic function $f_i(x) = \frac{1}{2} \hat{\sigma}_i (x - x_i^*)^T (x - x_i^*)$ where $\hat{\sigma}_i = \frac{L}{ \Vert \hat{x}-x_i^* \Vert }$, one can easily verify that $\hat{\sigma}_i \geq \sigma_i$ and $\Vert \nabla f_i(\hat{x}) \Vert = L$. So, we have $\mathcal{H}_i(x_i^*) \supseteq \bar{\mathcal{B}}_{\frac{L} {\sigma_i}}(x_i^*)$.

From the definition of $\mathcal{H}$ and $\mathcal{H}_i$, we get $\mathcal{H}(x_1^*,x_2^*) = \mathcal{H}_1(x_1^*) \cap \mathcal{H}_2(x_2^*) = \bar{\mathcal{B}}_{\frac{L}{\sigma_1}}(x_1^*) \cap \bar{\mathcal{B}}_{\frac{L}{\sigma_2}}(x_2^*)$.  Finally, since the conditions of the set $\mathcal{H}$ are the same as the last two conditions in the set $\hat{\mathcal{M}}$, we get $\hat{\mathcal{M}}(x_1^*,x_2^*) \subseteq \mathcal{H}(x_1^*,x_2^*)$.
\end{proof} 

The result from Lemma~\ref{lem:nested_sets} shows that the set $\hat{\mathcal{M}}$ contains the set $\mathcal{M}$ from \eqref{set M} within it.  Thus, we will derive the equation of the boundary of $\hat{\mathcal{M}}$ in $n$-dimensional space from the angles  $\tilde{\phi}_i$ defined in \eqref{eqn:phi_tilde}, and the necessary condition $\tilde{\phi}_1(x) + \tilde{\phi}_2 (x) \geq \psi(x)$. 

From this point, we will denote $x = (z_1, \mathbf{z}) \in \mathbb{R}^n$ where $z_1 \in \mathbb{R}$ and $\mathbf{z} = (z_2, z_3, \ldots, z_n) \in \mathbb{R}^{n-1}$.

\begin{lemma}
(i) $x^*_1 \in \partial \hat{\mathcal{M}}$ if and only if $r \leq \frac{L}{2 \sigma_2}$. \\
(ii) $x^*_2 \in \partial \hat{\mathcal{M}}$ if and only if $r \leq \frac{L}{2 \sigma_1}$. 
\label{lem: 1D analyze}
\end{lemma}

\begin{proof}
Consider case (i) with $r \leq \frac{L}{2 \sigma_2}$. First, suppose $x = ( -r+\epsilon, \mathbf{0})$ where $0 < \epsilon < \min \{ \frac{L}{2 \sigma_1}, 2r\}$. Since $r \leq \frac{L}{2 \sigma_2}$, $x \in \mathcal{H}$. By the location of $x$, we get $\alpha_1(x) = 0$ and  $\alpha_2(x) = \pi$. Consequently, we obtain $\psi(x) = 0$. Since $0 \leq \tilde{\phi}_i \leq \frac{\pi}{2}$, the inequality $\tilde{\phi}_1(x) + \tilde{\phi}_2(x) \geq \psi(x)$ holds. This means that $x \in \hat{\mathcal{M}}$. 

Second, suppose $x = ( -r-\epsilon, \mathbf{0})$ where $0 < \epsilon < \min \{ \frac{L}{2 \sigma_1}, \frac{L}{\sigma_2} - 2r \}$ and $r < \frac{L}{2 \sigma_2}$. By the location of $x$, we get $x \in \mathcal{H}$, $\alpha_1 (x) = \pi$, and $\alpha_2 (x) = \pi$. Consequently, we obtain $\psi(x) = \pi$. In order to satisfy the inequality $\tilde{\phi}_1(x) + \tilde{\phi}_2(x) \geq \psi(x)$, we have to choose $\tilde{\phi}_1(x) = \frac{\pi}{2}$ and $\tilde{\phi}_2(x) = \frac{\pi}{2}$. However, since $\sigma_1 >0$, $L > 0$, and $\Vert x - x_1^* \Vert > 0$, we get $\tilde{\phi}_1(x) < \frac{\pi}{2}$ and conclude that $x \notin \hat{\mathcal{M}}$. Thus, we have $x_1^{\ast} \in \partial \hat{\mathcal{M}}$.

If $r > \frac{L}{2 \sigma_2}$, then $x_1^* \notin \mathcal{H}$ and therefore $x_1^* \notin \partial \hat{\mathcal{M}}$. Combining the analysis above, we can conclude that $x_1^* \in \partial \hat{\mathcal{M}}$ if and only if $r \leq \frac{L}{2 \sigma_2}$. A similar proof applies to case (ii).
\end{proof}
Define the set of points 
\begin{multline*}
\mathcal{T}_n (L) = \bigg\{ (z_1, \mathbf{z} )\in \mathbb{R}^n: 
\frac{z_1^2 + \Vert \mathbf{z} \Vert^2 -r^2}{d_1^2 d_2^2}  + \frac{\sigma_1 \sigma_2}{L^2} \\
= \sqrt{\frac{1}{d_1^2} - \frac{\sigma_1^2}{L^2}} \cdot \sqrt{\frac{1}{d_2^2} - \frac{\sigma_2^2}{L^2}}  \bigg\} 
\end{multline*}
where $d_1 = \sqrt{(z_1 + r)^2 + \Vert \mathbf{z} \Vert^2}$ and $d_2 = \sqrt{(z_1 - r)^2 + \Vert \mathbf{z} \Vert^2}$. For simplicity of notation, if $L$ is a constant, we will omit the argument and write it as $\mathcal{T}_n$. In addition, since $\mathcal{H}_i(x_i^*) = \bar{\mathcal{B}}_{\frac{L}{\sigma_i}}(x_i^*)$, we can write $\partial \mathcal{H}_i$ for $i \in \{1,2 \}$ as follows:
\begin{equation*}
\partial \mathcal{H}_1 = \bigg\{ (z_1, \mathbf{z} )\in \mathbb{R}^n: (z_1+r)^2 + \Vert \mathbf{z} \Vert^2 = \frac{L^2}{\sigma_1^2} \bigg\} 
\end{equation*}
\begin{equation*}
\partial \mathcal{H}_2 = \bigg\{ (z_1, \mathbf{z} )\in \mathbb{R}^n: (z_1-r)^2 + \Vert \mathbf{z} \Vert^2 = \frac{L^2}{\sigma_2^2} \bigg\}.  
\end{equation*}

\begin{lemma}
The set $\{ x \in \mathbb{R}^{n} : \tilde{\phi}_1(x) + \tilde{\phi}_2(x) = \pi - (\alpha_2(x) - \alpha_1(x)) \}$ is equivalent to $\mathcal{T}_n$.
\label{lem: T_n}
\end{lemma}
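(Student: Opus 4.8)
The plan is to show that the equation defining the target set, namely $\tilde{\phi}_1(x) + \tilde{\phi}_2(x) = \pi - (\alpha_2(x) - \alpha_1(x))$, is equivalent to the algebraic relation defining $\mathcal{T}_n$ by taking the cosine of both sides and simplifying. First I would observe that both sides of this angular equation lie in the interval $[0,\pi]$: each $\tilde{\phi}_i \in [0,\pi/2]$ by definition, and since $\alpha_2(x) - \alpha_1(x)$ is the angle between the unit vectors $u_1(x)$ and $u_2(x)$ it lies in $[0,\pi]$, so $\pi - (\alpha_2(x) - \alpha_1(x)) \in [0,\pi]$ as well. Because $\cos$ is strictly decreasing, hence injective, on $[0,\pi]$, the angular equation holds if and only if the cosines of its two sides agree. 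This injectivity is the crucial non-algebraic step: it is what upgrades the manipulation from a single implication to a genuine equivalence, since taking cosines is not reversible in general.

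Next I would compute the two cosines explicitly. For the left side, the angle-addition identity gives $\cos(\tilde{\phi}_1 + \tilde{\phi}_2) = \cos\tilde{\phi}_1 \cos\tilde{\phi}_2 - \sin\tilde{\phi}_1 \sin\tilde{\phi}_2$; I would then substitute $\cos\tilde{\phi}_i = \frac{\sigma_i}{L} d_i$ from \eqref{eqn:phi_tilde}, and, using that $\tilde{\phi}_i \in [0,\pi/2]$ forces the sine to be nonnegative, $\sin\tilde{\phi}_i = \sqrt{1 - \frac{\sigma_i^2}{L^2} d_i^2}$. For the right side, $\cos(\pi - (\alpha_2 - \alpha_1)) = -\cos(\alpha_2 - \alpha_1)$, and because $\alpha_2(x) - \alpha_1(x)$ is exactly the angle between $u_1(x)$ and $u_2(x)$, I would write $\cos(\alpha_2 - \alpha_1) = u_1(x)^T u_2(x) = \frac{(x-x_1^*)^T (x-x_2^*)}{d_1 d_2}$. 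Using the coordinates $x_1^* = (-r,0,\ldots,0)$ and $x_2^* = (r,0,\ldots,0)$, the numerator evaluates to $(z_1+r)(z_1-r) + \Vert \mathbf{z}\Vert^2 = z_1^2 + \Vert \mathbf{z}\Vert^2 - r^2$.

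Finally I would assemble these into the single equation $\frac{\sigma_1\sigma_2}{L^2} d_1 d_2 - \sqrt{1 - \frac{\sigma_1^2}{L^2} d_1^2}\,\sqrt{1 - \frac{\sigma_2^2}{L^2} d_2^2} = -\frac{z_1^2 + \Vert \mathbf{z}\Vert^2 - r^2}{d_1 d_2}$, move the last term to the left, divide through by $d_1 d_2$, and absorb each factor $1/d_i$ into its square root via $\frac{1}{d_i}\sqrt{1 - \frac{\sigma_i^2}{L^2} d_i^2} = \sqrt{\frac{1}{d_i^2} - \frac{\sigma_i^2}{L^2}}$. This reproduces exactly the defining relation of $\mathcal{T}_n$, and since every step is an equivalence (no squaring is introduced and all square roots carry their correct nonnegative sign), the two sets coincide. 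The main points requiring care are the injectivity-of-cosine argument in the first step and the exclusion of the degenerate points $x = x_1^*$ and $x = x_2^*$, where $d_1$ or $d_2$ vanishes and $\alpha_i$ is undefined; these points are automatically absent from $\mathcal{T}_n$ owing to the $d_1^2 d_2^2$ appearing in the denominator.
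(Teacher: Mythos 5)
Your proof is correct and follows essentially the same route as the paper's: both hinge on the injectivity of the cosine on $[0,\pi]$ (justified by $\tilde{\phi}_i \in [0,\pi/2]$ and $\alpha_2 - \alpha_1 \in [0,\pi]$) and then reduce $\cos(\tilde{\phi}_1+\tilde{\phi}_2) = -\cos(\alpha_2-\alpha_1)$ algebraically, dividing by $d_1 d_2$, to the defining relation of $\mathcal{T}_n$. The only cosmetic difference is that you evaluate $\cos(\alpha_2-\alpha_1)$ directly as the inner product $u_1(x)^T u_2(x)$, whereas the paper expands it via the coordinate formulas $\cos\alpha_i$, $\sin\alpha_i$ from \eqref{cos alpha} and \eqref{sin alpha} --- the resulting computation is identical, and your explicit exclusion of the degenerate points $x_1^*, x_2^*$ matches the paper's restriction $x \notin \{x_1^*, x_2^*\}$.
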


\begin{proof}
From Fig. \ref{fig:angle2}, for any point $x = (z_1, \mathbf{z}) \in \mathbb{R}^{n}$ with $x \notin \{x_1^{\ast}, x_2^{\ast}\}$, the $z_1$-axis equations are given by (with $x$ elided for notational convenience)
\begin{align}
z_1 = d_1 \cos \alpha_1 -r &= d_2 \cos \alpha_2 + r,  \nonumber\\
\Leftrightarrow \quad \cos \alpha_1 = \frac{z_1 + r}{d_1} 
\quad \text{and}& \quad
\cos \alpha_2 = \frac{z_1 - r}{d_2}. \label{cos alpha}
\end{align}
The $\mathbf{z}$-axes equations are given by
\begin{align}
\Vert \mathbf{z} \Vert &= d_1 \sin \alpha_1 = d_2 \sin \alpha_2, \nonumber\\
\Leftrightarrow \quad \sin \alpha_1 &= \frac{\Vert \mathbf{z} \Vert}{d_1}
\quad \text{and} \quad
\sin \alpha_2 = \frac{\Vert \mathbf{z} \Vert}{d_2}. \label{sin alpha}
\end{align}
Consider the equation
\begin{align}
\tilde{\phi}_1(x) + \tilde{\phi}_2(x) &= \pi - ( \alpha_2(x) - \alpha_1(x) ). \label{angle eq}
\end{align}
Since $0 \leq \tilde{\phi}_i \leq \frac{\pi}{2}$, we get $0 \leq \tilde{\phi}_1 + \tilde{\phi}_2 \leq \pi$. 
Since $0 \leq \alpha_i \leq \pi$ and $\alpha_2 \geq \alpha_1$,  $0 \leq \pi - (\alpha_2 - \alpha_1) \leq \pi$. 
Thus, since the cosine function is one-to-one for this range of angles, equation \eqref{angle eq} is equivalent to
\begin{align*}
	\cos(\tilde{\phi}_1 + \tilde{\phi}_2) &= \cos(\pi - (\alpha_2 - \alpha_1)) \\
	\Leftrightarrow \quad \cos(\tilde{\phi}_1 + \tilde{\phi}_2) &= - \cos (\alpha_2 - \alpha_1). 
\end{align*}
Expanding this equation and substituting \eqref{cos alpha}, \eqref{sin alpha}, and $\cos(\tilde{\phi}_i(x)) = \frac{ \sigma_i}{L} d_i$ for $i= 1,2$, we get
\begin{multline*}
	\frac{\sigma_1}{L} d_1 \cdot \frac{\sigma_2}{L} d_2 - \sqrt{1 - \Big(\frac{\sigma_1}{L} d_1 \Big)^2} \cdot \sqrt{1 - \Big(\frac{\sigma_2}{L} d_2 \Big)^2} \\
     = - \frac{z_1 -r}{d_2} \cdot\frac{z_1 +r}{d_1}  - \frac{\Vert \mathbf{z} \Vert}{d_2} \cdot \frac{\Vert \mathbf{z} \Vert}{d_1}. 
\end{multline*}
Dividing the above equation by $d_1 d_2$ and rearranging yields $\mathcal{T}_n$.\\
\end{proof}

For convenience, we define $\gamma_i \triangleq \frac{L^2}{\sigma_i^2}$ for $i \in \{1, 2 \}$ and $\beta  \triangleq \frac{\sigma_2}{\sigma_1}$. We also define $\lambda_1  \triangleq \big(\frac{1 + \beta}{1 + 2 \beta} \big) \frac{\gamma_1}{2 r} - \frac{r}{1 + 2 \beta}$ and $\lambda_2  \triangleq -\big(\frac{1 + \beta}{2 +  \beta} \big) \frac{\gamma_2}{2 r} + \frac{\beta r}{2 + \beta}$.

In the following lemma, we will show that if we consider the points in $\partial \mathcal{H}_1$ or $\partial \mathcal{H}_2$, we can simplify the angle condition given in \eqref{set: M_hat} for $\hat{\mathcal{M}}$.
\begin{lemma}
Consider $x = (z_1, \mathbf{z}) \notin \{x_1^{\ast}, x_2^{\ast}\}$. \\
(i) If $x \in \partial \mathcal{H}_1$ then $\tilde{\phi}_1(x) + \tilde{\phi}_2(x) < \pi - (\alpha_2(x) - \alpha_1(x))$ if and only if $z_1 < \lambda_1$. \\
(ii) If $x \in \partial \mathcal{H}_2$ then $\tilde{\phi}_1(x) + \tilde{\phi}_2(x) < \pi - (\alpha_2(x) - \alpha_1(x))$ if and only if $z_1 > \lambda_2$. 
\label{lem: point on R_n}
\end{lemma}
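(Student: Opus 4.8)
My plan is to exploit the fact that on $\partial\mathcal{H}_1$ the angle $\tilde{\phi}_1$ collapses to zero, which reduces the two-angle condition to a single-angle one that can be turned into a linear inequality in $z_1$. For part (i), since $x \in \partial\mathcal{H}_1$ forces $d_1 = \Vert x - x_1^* \Vert = L/\sigma_1$, the identity $\cos\tilde{\phi}_1(x) = \frac{\sigma_1}{L} d_1$ gives $\cos\tilde{\phi}_1 = 1$ and hence $\tilde{\phi}_1(x) = 0$. The inequality $\tilde{\phi}_1 + \tilde{\phi}_2 < \pi - (\alpha_2 - \alpha_1)$ therefore becomes simply $\tilde{\phi}_2(x) < \pi - (\alpha_2(x) - \alpha_1(x))$.

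Next I would remove the transcendental functions exactly as in the proof of Lemma~\ref{lem: T_n}. Both $\tilde{\phi}_2 \in [0, \frac{\pi}{2}]$ and $\pi - (\alpha_2 - \alpha_1) \in [0, \pi]$ (using $\alpha_2 \ge \alpha_1$ and $\alpha_i \in [0,\pi]$), so both sides lie in $[0,\pi]$ where the cosine is strictly decreasing; the inequality is thus equivalent to $\cos\tilde{\phi}_2 > -\cos(\alpha_2 - \alpha_1)$. Substituting $\cos\tilde{\phi}_2 = \frac{\sigma_2}{L} d_2$ together with $\cos(\alpha_2 - \alpha_1) = \cos\alpha_1\cos\alpha_2 + \sin\alpha_1\sin\alpha_2$ evaluated through \eqref{cos alpha} and \eqref{sin alpha} gives $\cos(\alpha_2 - \alpha_1) = \frac{z_1^2 + \Vert \mathbf{z} \Vert^2 - r^2}{d_1 d_2}$, so the condition reads $\frac{\sigma_2}{L} d_2 > -\frac{z_1^2 + \Vert \mathbf{z} \Vert^2 - r^2}{d_1 d_2}$. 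Because $x \notin \{x_1^*, x_2^*\}$ both $d_1, d_2 > 0$, so I can clear denominators without reversing the inequality to obtain $\frac{\sigma_2}{L} d_1 d_2^2 > -(z_1^2 + \Vert \mathbf{z} \Vert^2 - r^2)$.

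The final step is to linearize in $z_1$ using the defining relation of $\partial\mathcal{H}_1$, namely $d_1^2 = (z_1+r)^2 + \Vert \mathbf{z} \Vert^2 = \gamma_1$. This yields $d_1 = L/\sigma_1$, $d_2^2 = \gamma_1 - 4rz_1$, and $z_1^2 + \Vert \mathbf{z} \Vert^2 - r^2 = \gamma_1 - 2rz_1 - 2r^2$, so that (after using $\frac{\sigma_2}{L}\cdot\frac{L}{\sigma_1} = \beta$) the inequality collapses to $\beta(\gamma_1 - 4rz_1) + \gamma_1 - 2rz_1 - 2r^2 > 0$, i.e. $(1+\beta)\gamma_1 - 2r^2 > 2(1+2\beta)r z_1$, which is precisely $z_1 < \lambda_1$. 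Part (ii) is entirely symmetric: on $\partial\mathcal{H}_2$ one has $d_2 = L/\sigma_2$ and $\tilde{\phi}_2 = 0$, and the same computation with $d_1^2 = \gamma_2 + 4rz_1$ and $z_1^2 + \Vert \mathbf{z} \Vert^2 - r^2 = \gamma_2 + 2rz_1 - 2r^2$ produces $(4+2\beta)r z_1 > -(1+\beta)\gamma_2 + 2\beta r^2$, i.e. $z_1 > \lambda_2$.

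The main thing to handle carefully is well-definedness together with the direction of the monotonicity step. I must confirm that $\tilde{\phi}_2$ is actually defined on the relevant part of $\partial\mathcal{H}_1$ (i.e. that $\frac{\sigma_2}{L} d_2 \le 1$); where it is not, $x \notin \mathcal{H}_2$, and I would argue separately that such points satisfy $z_1 < \lambda_1$ so that the stated equivalence is preserved. The only genuinely delicate point is justifying that the passage from the angle inequality to the cosine inequality is an equivalence (not merely an implication), which rests on both sides lying in $[0,\pi]$ and on the strict monotonicity of cosine there; the remaining manipulations are routine substitutions using \eqref{cos alpha}, \eqref{sin alpha}, and the sphere equation.
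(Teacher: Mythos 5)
Your proposal is correct and follows essentially the same route as the paper's proof: on $\partial \mathcal{H}_1$ (resp.\ $\partial \mathcal{H}_2$) the angle $\tilde{\phi}_1$ (resp.\ $\tilde{\phi}_2$) vanishes, taking cosines is an equivalence since both sides lie in $[0,\pi]$ where cosine is strictly decreasing, and substituting \eqref{cos alpha}, \eqref{sin alpha} together with the sphere relations $d_2^2 = \gamma_1 - 4rz_1$ (resp.\ $d_1^2 = \gamma_2 + 4rz_1$) linearizes the condition to exactly $z_1 < \lambda_1$ (resp.\ $z_1 > \lambda_2$), matching the paper's algebra. Your added care about well-definedness of $\tilde{\phi}_2$ on $\partial \mathcal{H}_1$ is a point the paper leaves implicit, and your side claim there is indeed valid whenever $\bar{\mathcal{B}}_{L/\sigma_1}(x_1^*) \cap \bar{\mathcal{B}}_{L/\sigma_2}(x_2^*) \neq \emptyset$, because $\beta \gamma_2 = \sqrt{\gamma_1 \gamma_2}$ gives $\gamma_1 + (1+2\beta)\gamma_2 = (\sqrt{\gamma_1} + \sqrt{\gamma_2})^2 \geq 4r^2$.
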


\begin{proof}
Consider part (i). Since $x \in \partial \mathcal{H}_1$, we get $\|x-x_1^*\| = \frac{L}{\sigma_1}$ and thus $\tilde{\phi}_1(x) = 0$ from \eqref{eqn:phi_tilde}. Consider the inequality
\begin{align*}
\tilde{\phi}_1(x) + \tilde{\phi}_2(x) < \pi - (\alpha_2(x) - \alpha_1(x)).
\end{align*}
Substitute $\tilde{\phi}_1(x) = 0$ and take cosine of both sides of the inequality (and use \eqref{eqn:phi_tilde}) to get 
\begin{align*}
\frac{\sigma_2}{L}d_2 >  - \cos (\alpha_2(x) - \alpha_1(x)). 
\end{align*}
Expand the cosine and substitute the equations \eqref{cos alpha} and \eqref{sin alpha} to obtain
\begin{align}
\frac{\sigma_2}{L}d_2 >  -\frac{z_1^2 + \Vert \mathbf{z} \Vert^2 - r^2}{d_1 d_2}. \label{point in H ineq}
\end{align}
Since $x \in \partial \mathcal{H}_1$, we have $d_1 = \frac{L}{\sigma_1}$, and $\Vert \mathbf{z} \Vert^2 = \frac{L^2}{\sigma_1^2} - (z_1 + r)^2$. Also, $d_2^2 = (z_1 - r)^2 + \Vert \mathbf{z} \Vert^2 = (z_1 - r)^2 + \frac{L^2}{\sigma_1^2} - (z_1 + r)^2 = -4rz_1 + \frac{L^2}{\sigma_1^2}$. Multiply the inequality \eqref{point in H ineq} by $d_1 d_2$ and then substitute $d_1$, $\Vert \mathbf{z} \Vert^2$, and $d_2^2$ to get
\begin{align*}
&\frac{\sigma_2}{\sigma_1} \Big(-4r z_1 + \frac{L^2}{\sigma_1^2} \Big) > 2r^2 + 2r z_1 - \frac{L^2}{\sigma_1^2} \\
\Leftrightarrow \quad &z_1 \Big(2r + 4r \frac{\sigma_2}{\sigma_1} \Big) < \frac{\sigma_2}{\sigma_1} \cdot \frac{L^2}{\sigma_1^2} + \frac{L^2}{\sigma_1^2} - 2r^2 \\
\Leftrightarrow \quad &z_1 < \Big(\frac{1 + \beta}{1 + 2 \beta} \Big) \frac{\gamma_1}{2 r} - \frac{r}{1 + 2 \beta}.
\end{align*}
The proof of the second part is similar to the first part. \\
\end{proof}
In the following lemma, we consider the case when $r \leq \frac{L}{2 \sigma_1}$ and $r \leq \frac{L}{2 \sigma_2}$. The goals are to compare $\lambda_1$ with the maximum value of the $z_1$-component over all points of $\partial \mathcal{H}_1$ (which is $-r + \frac{L}{\sigma_1}$), and compare $\lambda_2$ with the minimum value of the $z_1$-component over all points of $\partial \mathcal{H}_2$ (which is $r - \frac{L}{\sigma_1}$), respectively.
\begin{lemma}
Consider $x = (z_1, \mathbf{z}) \in \mathbb{R}^n$. \\
(i) If $r \leq \frac{L}{2 \sigma_1}$ and $z_1 \leq \frac{L}{\sigma_1} -r$ then $z_1 \leq \lambda_1$, with equality only if $r = \frac{L}{2 \sigma_1}$ and $z_1 = \frac{L}{\sigma_1} -r$. \\
(ii) If $r \leq \frac{L}{2 \sigma_2}$ and $z_1 \geq r- \frac{L}{\sigma_2}$ then $z_1 \geq \lambda_2$, with equality only if $r = \frac{L}{2 \sigma_2}$ and $z_1 = r- \frac{L}{\sigma_2}$.
\label{lem: compare z_1 1}
\end{lemma}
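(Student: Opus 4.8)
The plan is to prove each part by a transitivity reduction. Notice that in (i) the hypothesis already bounds $z_1$ by the fixed endpoint $\frac{L}{\sigma_1}-r$, which does not depend on $z_1$; so it suffices to establish the single scalar inequality $\frac{L}{\sigma_1}-r \le \lambda_1$, and then chain $z_1 \le \frac{L}{\sigma_1}-r \le \lambda_1$. This collapses the whole claim to comparing two quantities that involve only $r$, $\sigma_1$, $\beta$, and $L$. Part (ii) is handled the same way by reducing to $r-\frac{L}{\sigma_2} \ge \lambda_2$.

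For the reduction in (i), I would substitute the definition of $\lambda_1$, multiply through by $2r(1+2\beta) > 0$ to clear denominators, and collect all terms on one side. Writing $a \triangleq \frac{L}{\sigma_1}$ (so that $\gamma_1 = a^2$), the target inequality $\frac{L}{\sigma_1}-r \le \lambda_1$ becomes the quadratic condition $(1+\beta)a^2 - 2r(1+2\beta)a + 4\beta r^2 \ge 0$. Its discriminant simplifies to the perfect square $4r^2$, using the identity $(1+2\beta)^2 - 4\beta(1+\beta) = 1$, so the quadratic factors cleanly as $(1+\beta)(a - 2r)\big(a - \tfrac{2\beta r}{1+\beta}\big)$.

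The sign is then read directly from the hypothesis $r \le \frac{L}{2\sigma_1}$, i.e. $a \ge 2r$: the factor $a-2r$ is nonnegative, while $a - \tfrac{2\beta r}{1+\beta}$ is strictly positive because $\tfrac{2\beta r}{1+\beta} < 2r \le a$. Hence the product is nonnegative and vanishes precisely when $a = 2r$, that is, when $r = \frac{L}{2\sigma_1}$. Tracing this back through the chain $z_1 \le \frac{L}{\sigma_1}-r \le \lambda_1$, equality $z_1 = \lambda_1$ forces both links to be tight, which simultaneously pins down $r = \frac{L}{2\sigma_1}$ and $z_1 = \frac{L}{\sigma_1}-r$, exactly as stated. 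Part (ii) is entirely analogous: writing $b \triangleq \frac{L}{\sigma_2}$, clearing denominators in $r-\frac{L}{\sigma_2} \ge \lambda_2$ yields $(1+\beta)b^2 - 2r(2+\beta)b + 4r^2 \ge 0$, whose discriminant is $4\beta^2 r^2$ and which factors as $(1+\beta)(b - 2r)\big(b - \tfrac{2r}{1+\beta}\big)$; the same $b \ge 2r$ argument closes it.

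I do not anticipate a genuine obstacle here, as the core is routine algebra once the transitivity reduction is in place. The only points requiring care are the equality bookkeeping and the discriminant identities. For the former, one must verify that the non-boundary factor is strictly positive, so that equality in the quadratic is equivalent to $r = \frac{L}{2\sigma_i}$, and then correctly combine the two links of the chain to recover both equality conditions. For the latter, the perfect-square simplifications of the discriminants are what make the factorizations exact, so I would double-check those identities before reading off the signs.
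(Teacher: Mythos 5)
Your proof is correct and takes essentially the same route as the paper: the paper likewise reduces by transitivity to the endpoint comparison $\frac{L}{\sigma_1}-r \le \lambda_1$, stated in the normalized variable $\chi_1 = \frac{L}{\sigma_1 r}$, and its factored inequality $(\chi_1 - 2)\big(\frac{1}{2}(1+\beta)\chi_1 - \beta\big) \ge 0$ is exactly your quadratic $(1+\beta)a^2 - 2r(1+2\beta)a + 4\beta r^2 \ge 0$ after the substitution $a = r\chi_1$ and scaling by $2/r^2$. Your explicit discriminant computation and equality bookkeeping are just a more detailed rendering of the same algebra, so there is nothing to fix.
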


\begin{proof}
Consider the first part of the lemma. First, we will rewrite each inequality. The inequality $r \leq \frac{L}{2 \sigma_1}$ becomes $\frac{L}{\sigma_1 r} \geq 2$, $z_1 \leq \frac{L}{\sigma_1} -r$ becomes $\frac{z_1}{r} \leq \frac{L}{\sigma_1 r}- 1$, and $z_1 \leq \big(\frac{1 + \beta}{1 + 2 \beta} \big) \frac{\gamma_1}{2 r} - \frac{r}{1 + 2 \beta}$ becomes $\frac{z_1}{r} \leq \big(\frac{1 + \beta}{1 + 2 \beta} \big) \frac{\gamma_1}{r^2} - \frac{1}{1 + 2 \beta}$. For simplicity, we define a new variable $\chi_1 = \frac{L}{\sigma_1 r}$. Note that $\frac{\gamma_1}{r^2} = \chi_1^2$. Then we need to show the equivalent statement of the lemma that if $\chi_1 \geq 2$ then $\chi_1 - 1 \leq \frac{1}{2} \big(\frac{1 + \beta}{1 + 2 \beta} \big) \chi_1^2 - \frac{1}{1 + 2 \beta}$, with equality only if $\chi_1 = 2$. 

Consider
\begin{align*}
\chi_1 - 1 \leq \frac{1}{2} \Big(\frac{1 + \beta}{1 + 2 \beta} \Big) \chi_1^2 - \frac{1}{1 + 2 \beta}.
\end{align*}
Multiplying both sides by $1+ 2 \beta$, we get 
\begin{align*}
&\chi_1 + 2 \beta  \chi_1 - 2 \beta -1 \leq \frac{1}{2} (1 + \beta) \chi_1^2 -1 \\
\Leftrightarrow \quad &\frac{1}{2} (1 + \beta) \chi_1^2 - (1 + 2 \beta) \chi_1 + 2 \beta \geq 0 \\
\Leftrightarrow \quad &(\chi_1 - 2) \Big( \frac{1}{2} (1 + \beta) \chi_1 - \beta \Big) \geq 0 \\
\Leftrightarrow \quad &\chi_1 \leq \frac{2 \beta}{1 + \beta} \quad \text{or} \quad \chi_1 \geq 2.
\end{align*}
We conclude that $\chi_1 \geq 2$ implies $\chi_1 - 1 \leq \frac{1}{2} \Big(\frac{1 + \beta}{1 + 2 \beta} \Big) \chi_1^2 - \frac{1}{1 + 2 \beta}$ with equality only if $\chi_1 = 2$. The proof of the second part of this lemma is similar to the first part. \\
\end{proof}
\begin{theorem}
If $r \leq \frac{L}{2} \cdot \min \big\{ \frac{1}{\sigma_1}, \frac{1}{\sigma_2} \big\}$ then the boundary $\partial \hat{\mathcal{M}}$ is given by $\mathcal{T}_n \cup \{ x^*_1, x^*_2 \}$. \label{Thm1}
\end{theorem}

\begin{proof}
Assume without loss of generality that $\sigma_1 \geq \sigma_2$. We want to show that if $x \in \partial \mathcal{H}$ then $\tilde{\phi}_1(x) + \tilde{\phi}_2(x) < \pi - (\alpha_2(x) - \alpha_1(x))$ except for $x \in \{ x_1^*, x_2^* \}$.

Suppose $x \in \partial \mathcal{H}_1$. Since $\mathcal{H}_1$ is closed and defined on $z_1 \in \big[ -r- \frac{L}{\sigma_1}, -r+ \frac{L}{\sigma_1} \big]$, from Lemma \ref{lem: compare z_1 1}, we get $z_1 \leq \lambda_1$ with equality only if $r = \frac{L}{2 \sigma_1}$ and $z_1 = \frac{L}{\sigma_1} - r$. If $z_1 < \lambda_1$, from Lemma \ref{lem: point on R_n}, we obtain $\tilde{\phi}_1(x) + \tilde{\phi}_2(x) < \psi(x)$. On the other hand, if $z_1 = \lambda_1$, from Lemma \ref{lem: compare z_1 1}, we get $r = \frac{L}{2 \sigma_1}$ and $z_1 = \frac{L}{\sigma_1} - r$. This means that $r = \frac{L}{2 \sigma_1}$ and $z_1 = \frac{L}{2 \sigma_1}$. Since $x \in \partial \mathcal{H}_1$ and $z_1 = r$, we conclude that $x = x_2^*$.

From the assumption $\sigma_1 \geq \sigma_2$ and the inequality $r \leq \frac{L}{2 \sigma_1}$, we get $r \leq \frac{L}{2 \sigma_2}$. We can similarly show that if $x \in \partial \mathcal{H}_2$ then $\tilde{\phi}_1(x) + \tilde{\phi}_2(x) < \psi(x)$ except for $x = x_1^*$ by using part (ii) of the Lemma  \ref{lem: point on R_n} and \ref{lem: compare z_1 1}.

Recall the definition of $\hat{\mathcal{M}}$ and Lemma~\ref{lem:nested_sets}. The boundary $\partial \hat{\mathcal{M}} \setminus \{x_1^*, x_2^*\}$ can be classified into 2 disjoint types. The first type consists of points $x$ with the following property:
$\tilde{\phi}_1(x) + \tilde{\phi}_2(x) = \pi - ( \alpha_2(x) - \alpha_1(x) )$ 
for which an example is shown in Fig. \ref{fig:angle3}.
\begin{figure}
\centering
\includegraphics[width=0.25\textwidth]{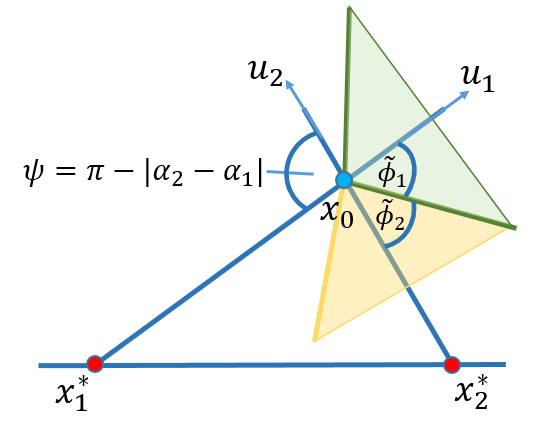}
\caption{\label{fig:angle3} The sets of gradients at a point on the boundary $\partial \hat{\mathcal{M}}$ that is not on the boundary $\partial \mathcal{H}$.  In this case, $\tilde{\phi}_1(x_0) + \tilde{\phi}_2(x_0) = \psi(x_0)$.}
\end{figure}
The second type consists of points $x$ with the following property:
$\tilde{\phi}_1(x) + \tilde{\phi}_2(x) > \pi - ( \alpha_2(x) - \alpha_1(x) )$
for which an example is shown in Fig. \ref{fig:angle4}. Note that $\alpha_1(x)$ and $\alpha_2(x)$ are not defined if $x \in \{ x^*_1,x^*_2 \}$. 

\begin{figure}
\centering
\includegraphics[width=0.30\textwidth]{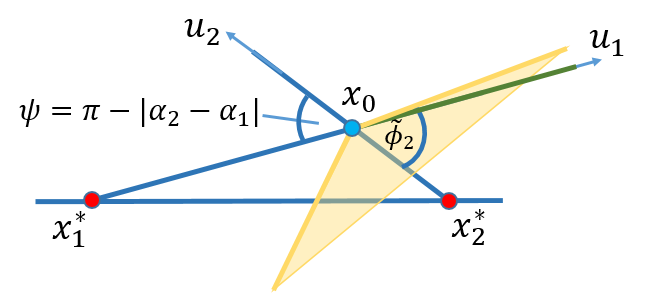}
\caption{\label{fig:angle4} The sets of gradients at a point on the boundary $\partial \hat{\mathcal{M}}$ that is also on the boundary $\partial \mathcal{H}$.  In this case, $\cos(\tilde{\phi}_1(x_0)) = 1$ and $\cos(\tilde{\phi}_2(x_0)) = \frac{ \sigma_2}{L} \Vert x_0-x_2^* \Vert $; however, $\tilde{\phi}_1(x_0) + \tilde{\phi}_2(x_0) > \psi(x_0)$.}
\end{figure} 

Consider the second type. We can separate it into three different cases as follows (recall that $\cos(\tilde{\phi}_i(x)) = \frac{\sigma_i}{L}\|x-x_i^*\|)$: \\
(i) $\cos(\tilde{\phi}_1(x)) < 1$ and $\cos(\tilde{\phi}_2(x)) < 1$. \\
(ii) $\cos(\tilde{\phi}_1(x)) = 1$ or $\cos(\tilde{\phi}_2(x)) = 1$.

We will argue that the point $x$ that satisfies the first case cannot be in $\partial \hat{\mathcal{M}}$. Since $\cos(\tilde{\phi}_1(x)) < 1$ and $\cos(\tilde{\phi}_2(x)) < 1$, we know that $x \in \mathcal{H}^\circ$. Since $\tilde{\phi}_1(x) + \tilde{\phi}_2(x) > \psi(x)$, there exists $\epsilon > 0$ such that for all $x_0 \in \mathcal{B}_{\epsilon} (x) \subset \mathcal{H}^\circ$, we have $\tilde{\phi}_1(x_0) + \tilde{\phi}_2(x_0) > \psi(x_0)$. So, $x \notin \partial \hat{\mathcal{M}}$.

Next, consider the point $x$ that satisfies the second case. From the definition of $\tilde{\phi}_i$ in \eqref{eqn:phi_tilde}, we get $\partial \mathcal{H}_i = \{ x: \cos(\tilde{\phi}_i(x)) = 1 \}$ for $i \in \{ 1,2 \}$. So, $x \in \partial \mathcal{H}_1 \cup \partial \mathcal{H}_2$. However, as discussed above, this makes $\tilde{\phi}_1(x) + \tilde{\phi}_2(x) < \psi(x)$ except for $x \in \{ x^*_1,x^*_2 \}$. Therefore, the point $x$ in the set $\{ x: \tilde{\phi}_1(x) + \tilde{\phi}_2(x) > \psi(x) \}$ cannot be in the boundary $\partial \hat{\mathcal{M}}$ and so $x \in \partial \hat{\mathcal{M}} \setminus \{x_1^*, x_2^* \}$ must satisfy $\{ x: \tilde{\phi}_1(x) + \tilde{\phi}_2(x) = \psi(x) \}$. From Lemma \ref{lem: T_n}, the set $\{ x: \tilde{\phi}_1(x) + \tilde{\phi}_2(x) = \psi(x) \}$ is equivalent to $\mathcal{T}_n$ and from Lemma \ref{lem: 1D analyze}, $\{ x^*_1,x^*_2 \} \in \partial \hat{\mathcal{M}} $. We conclude that if $r \leq \frac{L}{2 \sigma_1}$ then $\partial \hat{\mathcal{M}} = \mathcal{T}_n \cup \{ x^*_1, x^*_2 \}$.
\end{proof}

An example of the region $\hat{\mathcal{M}}$ given by Theorem \ref{Thm1} is shown in Fig. \ref{fig:f5}.

\section{Problem 2: Gradient Constraint on Convex Set}

In this section, we consider the second scenario when the gradient constraint is imposed on a given convex set in which the minimizers of two original functions are embedded. We begin by analyzing the necessary condition for any given point to be a minimizer using a geometric approach and then state the relationship among certain sets related to the minimizer region. Finally, the equation of a region of possible minimizers in $n$-dimensional space is presented. 

Let $d(x_0, \partial \mathcal{C})$ be the infimum distance between $x_0$ and the boundary of a convex set $\mathcal{C}$, i.e.,
\begin{equation*}
d(x_0, \partial \mathcal{C}) \triangleq \inf_{x \in \partial \mathcal{C}} \Vert x - x_0 \Vert.
\end{equation*}

\begin{lemma}
Suppose $\mathcal{C}$ is a compact convex set and $x_0$ is a point in $\mathcal{C}$. Suppose $f \in \mathcal{S} (\sigma)$, and the norm of the gradient of $f$ in $\mathcal{C}$ is bounded by $L$, i.e., $\Vert \nabla f(x) \Vert \leq L$, $\forall x \in \mathcal{C}$. Then 
\begin{equation*}
\Vert \nabla f(x_0) \Vert \leq L - \sigma d(x_0, \partial \mathcal{C}) 
\end{equation*} \label{lem: grad_convex}
\end{lemma}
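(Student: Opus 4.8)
The plan is to exploit strong convexity by travelling from $x_0$ in the direction of the gradient $\nabla f(x_0)$ until we reach the boundary of $\mathcal{C}$, and then to compare the gradient at that endpoint with the gradient at $x_0$. Assume first that $\nabla f(x_0) \neq 0$, set $u = \nabla f(x_0) / \Vert \nabla f(x_0) \Vert$, let $t = d(x_0, \partial \mathcal{C})$, and define $x = x_0 + t\, u$. The first step is to verify that $x \in \mathcal{C}$. For a point $y$ with $\Vert y - x_0 \Vert < t$, the segment joining $x_0 \in \mathcal{C}$ to $y$ cannot leave $\mathcal{C}$ without meeting $\partial \mathcal{C}$ at some point at distance less than $t$ from $x_0$, which would contradict the definition of $t$; hence $\mathcal{B}_t(x_0) \subseteq \mathcal{C}$, and since $\mathcal{C}$ is compact (thus closed), the boundary point $x$ of this ball also lies in $\mathcal{C}$.

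With $x \in \mathcal{C}$ secured, I would apply the defining inequality of $\sigma$-strong convexity to the pair $(x, x_0)$:
\[
(\nabla f(x) - \nabla f(x_0))^T (x - x_0) \geq \sigma \Vert x - x_0 \Vert^2 .
\]
Substituting $x - x_0 = t\, u$ and dividing by $t > 0$ gives $\nabla f(x)^T u - \nabla f(x_0)^T u \geq \sigma t$. Since $u$ is aligned with $\nabla f(x_0)$, we have $\nabla f(x_0)^T u = \Vert \nabla f(x_0) \Vert$, so
\[
\nabla f(x)^T u \geq \Vert \nabla f(x_0) \Vert + \sigma\, d(x_0, \partial \mathcal{C}) .
\]
By the Cauchy--Schwarz inequality and $\Vert u \Vert = 1$, the left-hand side is at most $\Vert \nabla f(x) \Vert$, which is at most $L$ because $x \in \mathcal{C}$. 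Rearranging $L \geq \Vert \nabla f(x_0) \Vert + \sigma\, d(x_0, \partial \mathcal{C})$ yields the claim.

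The genuine obstacle here is the geometric step $x \in \mathcal{C}$, namely that the closed ball of radius $d(x_0, \partial \mathcal{C})$ about $x_0$ is contained in the closed set $\mathcal{C}$; everything after it is a one-line application of strong convexity followed by Cauchy--Schwarz. Two degenerate cases remain to be noted. If $d(x_0, \partial \mathcal{C}) = 0$ (i.e. $x_0 \in \partial \mathcal{C}$) the statement reduces to the hypothesis $\Vert \nabla f(x_0) \Vert \leq L$. If $\nabla f(x_0) = 0$, the direction $u$ is chosen arbitrarily (any unit vector); the identical chain of inequalities then produces $L \geq \sigma\, d(x_0, \partial \mathcal{C})$, which is precisely $\Vert \nabla f(x_0) \Vert = 0 \leq L - \sigma\, d(x_0, \partial \mathcal{C})$, so the conclusion holds in every case.
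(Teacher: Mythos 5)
Your proof is correct and follows essentially the same route as the paper: apply the strong-convexity inequality between $x_0$ and a point reached by moving from $x_0$ in the direction of $\nabla f(x_0)$, then combine Cauchy--Schwarz with the gradient bound $L$ at that point and the distance $d(x_0, \partial \mathcal{C})$. The only difference is cosmetic: the paper travels along the gradient ray all the way to a boundary point (so $\Vert x - x_0 \Vert \geq d(x_0, \partial \mathcal{C})$ holds automatically, with existence of that point guaranteed by compactness), whereas you stop at distance exactly $d(x_0, \partial \mathcal{C})$ and instead verify the containment $\bar{\mathcal{B}}_{d(x_0, \partial \mathcal{C})}(x_0) \subseteq \mathcal{C}$; you also treat the degenerate cases $\nabla f(x_0) = 0$ and $x_0 \in \partial \mathcal{C}$ explicitly, which the paper leaves implicit.
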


\begin{proof}
From the definition of strongly convex functions, 
\begin{equation*}
(\nabla f(x) - \nabla f(y))^T (x-y) \geq \sigma \Vert x-y \Vert^2.
\end{equation*}
Let $x_0$ be a point in the convex set $\mathcal{C}$. For any point $x$ on the boundary of the convex set, we have 
\begin{align}
(\nabla f(x) - \nabla f(x_0))^T (x-x_0) \geq \sigma \Vert x-x_0 \Vert^2 \nonumber\\
\Leftrightarrow \; \nabla f(x)^T (x-x_0) \geq \nabla f(x_0)^T (x-x_0) + \sigma  \Vert x-x_0 \Vert^2 \nonumber\\
\Rightarrow \; \Vert \nabla f(x) \Vert \Vert x-x_0 \Vert \geq \nabla f(x_0)^T (x-x_0) + \sigma  \Vert x-x_0 \Vert^2. \nonumber
\end{align}
Since $\Vert \nabla f(x) \Vert \leq L$ for all $x \in \partial \mathcal{C}$,
\begin{align}
\nabla f(x_0)^T \frac{x-x_0}{\Vert x-x_0 \Vert} + \sigma \Vert x-x_0 \Vert \leq L. \nonumber
\end{align}
Let $\theta$ be the angle between $\nabla f(x_0)$ and a unit vector in the direction of $x-x_0$. The above inequality becomes
\begin{equation*}
\Vert \nabla f(x_0) \Vert \cos \theta \leq L - \sigma \Vert x-x_0 \Vert. 
\end{equation*}
We can always choose $x \in \partial \mathcal{C}$ so that $x-x_0$ is collinear with $\nabla f(x_0)$. By this choice of $x$, we get 
\begin{equation*}
\Vert \nabla f(x_0) \Vert \leq L - \sigma \Vert x-x_0 \Vert. 
\end{equation*}
Since $\Vert x-x_0 \Vert \geq d(x_0, \partial \mathcal{C})$ for all $x \in \partial \mathcal{C}$, we obtain
\begin{align*}
\Vert \nabla f(x_0) \Vert \leq L - \sigma d(x_0, \partial \mathcal{C}). 
\end{align*}
\end{proof}

\begin{lemma}
Suppose $\mathcal{C}$ is a compact convex set. Let $f_1 \in \mathcal{S} (\sigma_1)$, $f_2 \in \mathcal{S} (\sigma_2)$, $x_0$ be the minimizer of $f_1+f_2$ over the set $\mathcal{C}$ and $\hat{L}$ be the norm of the gradient of $f_1$ and $f_2$ at $x_0$. If the norm of the gradient of $f_1$ and $f_2$ in $\mathcal{C}$ is bounded by $L$, i.e., $\Vert \nabla f_i(x) \Vert \leq L$, $\forall x \in \mathcal{C}$, $i=1, 2$ then \label{lem: grads_convex}
\begin{align*}
\hat{L} \leq L - \min(\sigma_1, \sigma_2) \times d(x_0, \partial \mathcal{C}).
\end{align*} 
\end{lemma}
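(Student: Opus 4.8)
The plan is to reduce this lemma to a direct application of the preceding single-function result, Lemma~\ref{lem: grad_convex}, to each of $f_1$ and $f_2$ in turn. The first step is to make $\hat{L}$ precise: since $x_0$ is the minimizer of $f_1 + f_2$ over $\mathcal{C}$ together with the cancellation condition $\nabla f_1(x_0) = -\nabla f_2(x_0)$ (the same stationarity condition that defines the set $\mathcal{N}$), the two gradient vectors at $x_0$ have equal norm. Hence $\hat{L} = \Vert \nabla f_1(x_0) \Vert = \Vert \nabla f_2(x_0) \Vert$ is a single well-defined quantity, which is exactly what the phrase ``the norm of the gradient of $f_1$ and $f_2$ at $x_0$'' presupposes.

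Next I would invoke Lemma~\ref{lem: grad_convex} separately for each function. Because $f_1 \in \mathcal{S}(\sigma_1)$ with $\Vert \nabla f_1(x) \Vert \leq L$ for all $x \in \mathcal{C}$, that lemma yields $\Vert \nabla f_1(x_0) \Vert \leq L - \sigma_1\, d(x_0, \partial \mathcal{C})$, and analogously $\Vert \nabla f_2(x_0) \Vert \leq L - \sigma_2\, d(x_0, \partial \mathcal{C})$. Substituting $\hat{L}$ for both left-hand sides produces the two inequalities $\hat{L} \leq L - \sigma_i\, d(x_0, \partial \mathcal{C})$ for $i = 1, 2$ holding simultaneously.

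Finally I would combine them. Since $d(x_0, \partial \mathcal{C}) \geq 0$, the bound $L - \sigma_i\, d(x_0, \partial \mathcal{C})$ is larger precisely when $\sigma_i$ is smaller, so retaining the weaker of the two inequalities --- the one associated with the smaller strong-convexity parameter --- gives exactly $\hat{L} \leq L - \min(\sigma_1, \sigma_2)\, d(x_0, \partial \mathcal{C})$, which is the claim. (In fact one obtains the sharper bound $L - \max(\sigma_1, \sigma_2)\, d(x_0, \partial \mathcal{C})$ by keeping both inequalities, and the stated result is an immediate weakening of it.) I do not expect any genuine obstacle here: essentially all of the analytic content is already carried by Lemma~\ref{lem: grad_convex}, and the only point requiring attention is verifying that the minimizer/cancellation condition forces the two gradient norms at $x_0$ to coincide, so that a single scalar $\hat{L}$ is subject to both per-function bounds.
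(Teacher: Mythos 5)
Your proposal is correct and follows essentially the same route as the paper's own proof: apply Lemma~\ref{lem: grad_convex} to each $f_i$ separately, use the equality $\Vert \nabla f_1(x_0) \Vert = \Vert \nabla f_2(x_0) \Vert = \hat{L}$ forced by the stationarity/cancellation condition at the minimizer, and retain the looser of the two resulting bounds to obtain the $\min(\sigma_1, \sigma_2)$ statement. Your parenthetical remark that keeping both inequalities in fact yields the sharper bound $\hat{L} \leq L - \max(\sigma_1, \sigma_2)\, d(x_0, \partial \mathcal{C})$ is valid and slightly strengthens what the paper states, but does not change the argument.
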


\begin{proof}
Consider strongly convex functions $f_1 \in \mathcal{S}(\sigma_1)$ and $f_2(x) \in \mathcal{S}(\sigma_2)$. If $x_0$ is in $\mathcal{C}$, from Lemma~\ref{lem: grad_convex}, we get
\begin{align*}
\Vert \nabla f_i(x_0) \Vert \leq L - \sigma_i  d(x_0, \partial \mathcal{C}) \quad \text{for} \quad i=1,2 
\end{align*}
Since $x_0$ is the minimizer of the sum of two strongly convex functions, it must satisfy $\Vert \nabla f_1(x_0) \Vert = \Vert \nabla f_2(x_0) \Vert$. Thus,
\begin{equation}
\Vert \nabla f_1(x_0) \Vert = \Vert \nabla f_2(x_0) \Vert \leq L - \min(\sigma_1, \sigma_2)  \Vert x-x_0 \Vert. \nonumber
\end{equation}
Since $\Vert \nabla f_1(x_0) \Vert = \Vert \nabla f_2(x_0) \Vert = \hat{L}$, the result follows. \\
\end{proof}

\begin{figure}
\centering
\includegraphics[width=0.225\textwidth]{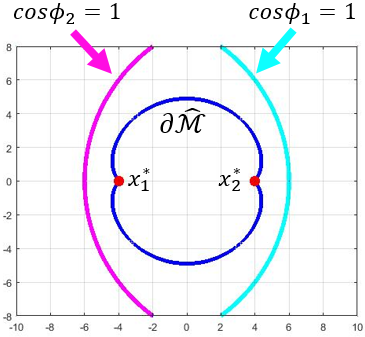}
\caption{\label{fig:f5}The boundary $\partial \hat{\mathcal{M}}$ (blue line) is plotted given original minimizers $x_1^*=(-4,0)$ and $x_2^*=(4,0)$ and parameters $\sigma_1 = \sigma_2 = 1$ and $L = 10$.}
\end{figure}

As before, without loss of generality, we can assume $x_1^* = (-r,0, \ldots, 0) \in \mathbb{R}^{n}$ and $x_2^* = (r,0, \ldots, 0) \in \mathbb{R}^{n}$ since for any minimizers $x_1^*$ and $x_2^*$, and a convex set $\mathcal{C}$, we can find a unique affine transformation that maps the original minimizers into $(-r,0, \ldots, 0)$ and $(r,0, \ldots, 0)$ respectively and also preserves the distance between these points, i.e., $\Vert x_1^* - x_2^* \Vert = 2r$. This transformation also uniquely maps the original convex set $\mathcal{C}$ into a new convex set $\mathcal{C}'$.

With the above assumption, we can now modify Lemma~\ref{lem: angle} with the new bound $\hat{L}$ on $\Vert \nabla f_i(x_0) \Vert$, provided by Lemma~\ref{lem: grads_convex}. Define a function
\begin{align*}
\tilde{L} (x) \triangleq L - \min(\sigma_1, \sigma_2) \times d(x, \partial \mathcal{C}) \quad \text{for} \quad x \in \mathcal{C}.
\end{align*}
\begin{lemma}
	Necessary conditions for a point $x \in \mathbb{R}^n$ to be a minimizer of $f_1 + f_2$ when the gradients of $f_1$ and $f_2$ are bounded by $L$ in a convex set $\mathcal{C}$ are (i) $\Vert x-x_i^* \Vert \leq \frac{1}{\sigma_i} \tilde{L}(x)$ for $i=1, 2$, and (ii) $\tilde{\phi}_1(x, \tilde{L}) + \tilde{\phi}_2(x, \tilde{L}) \geq \psi(x)$. \label{lem: angle2}
\end{lemma}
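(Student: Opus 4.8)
The plan is to mirror the proof of Lemma~\ref{lem: angle} almost verbatim, changing only one ingredient: the \emph{constant} gradient bound $L$ used there is replaced by the \emph{location-dependent} bound $\tilde{L}(x)$. The conceptual point is that Lemma~\ref{lem: grads_convex} supplies a sharper estimate on the gradient norms precisely at the minimizer. Indeed, if $x$ is the minimizer of $f_1 + f_2$ over $\mathcal{C}$, then $\Vert \nabla f_1(x) \Vert = \Vert \nabla f_2(x) \Vert = \hat{L} \leq \tilde{L}(x)$. Thus, at the candidate minimizer, $\tilde{L}(x)$ rather than $L$ is the relevant upper bound on each gradient norm, and substituting it into the angle analysis of Lemma~\ref{lem: angle} yields the claimed conditions.

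Concretely, I would begin from the strong-convexity inequality already established in the proof of Lemma~\ref{lem: angle}, namely
\[ \Vert \nabla f_i(x) \Vert \cos(\phi_i(x)) \geq \sigma_i \Vert x - x_i^* \Vert, \]
where $u_i(x) = \frac{x-x_i^*}{\Vert x-x_i^* \Vert}$ and $\phi_i(x) = \angle(\nabla f_i(x), u_i(x))$. Applying the bound $\Vert \nabla f_i(x) \Vert \leq \tilde{L}(x)$ from Lemma~\ref{lem: grads_convex} (in place of $\Vert \nabla f_i(x) \Vert \leq L$) gives
\[ \cos(\phi_i(x)) \geq \frac{\sigma_i}{\tilde{L}(x)} \Vert x - x_i^* \Vert. \]
For this inequality to admit a feasible angle $\phi_i(x)$, its right-hand side must not exceed $1$, which is exactly condition (i): $\Vert x - x_i^* \Vert \leq \frac{1}{\sigma_i} \tilde{L}(x)$. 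Under this condition, the angle $\tilde{\phi}_i(x, \tilde{L})$ defined in \eqref{eqn:phi_tilde} (with $L$ replaced by $\tilde{L}(x)$) is the maximal admissible value of $\phi_i(x)$.

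The remainder follows the identical cone-overlap geometry of Lemma~\ref{lem: angle}: the admissible gradient of $f_1$ lies in a cone of half-angle $\tilde{\phi}_1(x, \tilde{L})$ about $u_1(x)$, the admissible negated gradient of $f_2$ lies in a cone of half-angle $\tilde{\phi}_2(x, \tilde{L})$ about $-u_2(x)$, and these two cones can share a common vector (so that $\nabla f_1(x) = -\nabla f_2(x)$ is achievable) if and only if the sum of their half-angles is at least the angle $\psi(x)$ between $u_1(x)$ and $-u_2(x)$. This yields condition (ii): $\tilde{\phi}_1(x, \tilde{L}) + \tilde{\phi}_2(x, \tilde{L}) \geq \psi(x)$. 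I would emphasize that $\psi(x)$ depends only on the relative geometry of $x_1^*$, $x_2^*$, and $x$, and is therefore untouched by the substitution $L \mapsto \tilde{L}(x)$.

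The main (and only minor) obstacle is a bookkeeping verification rather than a substantive one: I must confirm that Lemma~\ref{lem: grads_convex} is applicable, i.e., that the two gradient norms coincide at the minimizer so that the single bound $\tilde{L}(x)$ simultaneously governs both $f_1$ and $f_2$. This holds because the optimality condition $\nabla f_1(x) = -\nabla f_2(x)$ forces $\Vert \nabla f_1(x) \Vert = \Vert \nabla f_2(x) \Vert$, which is precisely the hypothesis under which Lemma~\ref{lem: grads_convex} produces the bound $\hat{L} \leq \tilde{L}(x)$. Once this is in place, every step of Lemma~\ref{lem: angle} transfers verbatim with $L$ replaced by $\tilde{L}(x)$, completing the proof.
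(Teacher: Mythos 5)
Your proposal is correct and follows essentially the same route as the paper, which proves Lemma~\ref{lem: angle2} by stating that the argument of Lemma~\ref{lem: angle} carries over verbatim with $\Vert \nabla f_i(x) \Vert \leq L$ replaced by $\Vert \nabla f_i(x) \Vert \leq \tilde{L}(x)$. Your additional check that the optimality condition $\nabla f_1(x) = -\nabla f_2(x)$ forces equal gradient norms, so that Lemma~\ref{lem: grads_convex} indeed supplies the bound $\hat{L} \leq \tilde{L}(x)$ for both functions, is exactly the bookkeeping the paper leaves implicit.
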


The proof is the same as Lemma~\ref{lem: angle} except that we use $\Vert \nabla f_i(x) \Vert \leq \tilde{L} (x)$ instead of $\Vert \nabla f_i(x) \Vert \leq L$. \\

Now we consider the relationship between the set $\mathcal{N}$ in \eqref{set N} (which is the set that we want to identify) and other sets which we will define below. 
Recall the definition of $\mathcal{N}$ from (\ref{set N}) where $\mathcal{F}(\sigma, L, \mathcal{C}) = \{ f : f \in \mathcal{S}(\sigma), \quad \Vert \nabla f(x) \Vert \leq L, \quad  \forall x \in \mathcal{C} \}$ for a given convex set $\mathcal{C}$.

We define $\hat{\mathcal{N}}$ as
\begin{multline*}
\hat{\mathcal{N}}(x_1^*,x_2^*) \triangleq \Big\{ x \in \mathbb{R}^n: \tilde{\phi}_1(x, \tilde{L}) + \tilde{\phi}_2(x, \tilde{L}) \geq \psi(x), \\
\Vert x-x_1^* \Vert \leq \frac{1}{\sigma_1}  \tilde{L}(x), \quad \Vert x-x_2^* \Vert \leq \frac{1}{\sigma_2}  \tilde{L}(x)  \Big \} 
\end{multline*}
where $\tilde{L} (x) = L - \min(\sigma_1, \sigma_2) \times d(x, \partial \mathcal{C})$. Note that unlike $L$, $\tilde{L} (x)$ is a function of $x$. By Lemma~\ref{lem: angle2}, $\hat{\mathcal{N}}$ contains the minimizers of $f_1 + f_2$ and $\mathcal{N}(x_1^*,x_2^*) \subseteq \hat{\mathcal{N}}(x_1^*,x_2^*)$.

Define $\mathcal{I}$ to be the set 
\begin{multline*}
\mathcal{I}(x_1^*,x_2^*) \triangleq \{x \in \mathbb{R}^n : \exists f_1 \in \mathcal{S}(\sigma_1), \quad \exists f_2 \in \mathcal{S}(\sigma_2),  \\
\nabla f_1(x_1^*)  = 0, \quad \nabla f_2(x_2^*)  = 0,  \\
\Vert \nabla f_1(x) \Vert \leq \tilde{L} (x), \quad \Vert \nabla f_2(x) \Vert \leq \tilde{L} (x)  \}. 
\end{multline*}

Define $\mathcal{I}_i$, $i=1, 2$, to be the set of points such that there exists a strongly convex function $f_i$ whose minimizer is $x_i^*$ and whose gradient can be bounded by $\tilde{L}$ at $x$:
\begin{multline*} 
\mathcal{I}_i(x_i^*) \triangleq \{x \in \mathbb{R}^n : \exists f_i \in \mathcal{S}(\sigma_i), \; \nabla f_i(x_i^*)  = 0,  \\ 
\Vert \nabla f_i(x) \Vert \leq \tilde{L} (x) \}.
\end{multline*}

\begin{lemma}
$\mathcal{N}(x_1^*,x_2^*) \subseteq \hat{\mathcal{N}}(x_1^*,x_2^*) \subseteq \mathcal{I}(x_1^*,x_2^*) $, $\mathcal{I}(x_1^*,x_2^*) = \mathcal{I}_1(x_1^*) \cap \mathcal{I}_2(x_2^*)$, and $\hat{\mathcal{N}}(x_1^*,x_2^*) \subseteq \hat{\mathcal{M}}(x_1^*,x_2^*)$ for all $x \in \mathcal{C}$. \label{lem:nested_sets2}
\end{lemma}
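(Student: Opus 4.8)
The plan is to mirror the structure of the proof of Lemma~\ref{lem:nested_sets}, substituting the $x$-dependent bound $\tilde{L}(x)$ for the constant $L$, and to establish the three claims in turn. The first inclusion $\mathcal{N} \subseteq \hat{\mathcal{N}}$ is immediate from Lemma~\ref{lem: angle2}, since every minimizer of $f_1 + f_2$ over $\mathcal{C}$ satisfies both defining conditions of $\hat{\mathcal{N}}$ (this is already recorded in the text). The remaining work is to locate $\hat{\mathcal{N}}$ inside $\mathcal{I}$, to split $\mathcal{I}$ into $\mathcal{I}_1 \cap \mathcal{I}_2$, and to compare $\hat{\mathcal{N}}$ with $\hat{\mathcal{M}}$.

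For the decomposition $\mathcal{I} = \mathcal{I}_1 \cap \mathcal{I}_2$, I would note that the constraints defining $\mathcal{I}$ decouple: the existence of $f_1$ (with $\nabla f_1(x_1^*)=0$ and $\Vert \nabla f_1(x) \Vert \le \tilde{L}(x)$) is independent of the existence of $f_2$, so $x \in \mathcal{I}$ if and only if $x \in \mathcal{I}_1$ and $x \in \mathcal{I}_2$, exactly as in the final paragraph of the proof of Lemma~\ref{lem:nested_sets}. To then obtain $\hat{\mathcal{N}} \subseteq \mathcal{I}$, I would characterize each $\mathcal{I}_i$ as $\{x : \Vert x-x_i^* \Vert \le \frac{1}{\sigma_i}\tilde{L}(x)\}$ by repeating the two-sided argument of Lemma~\ref{lem:nested_sets}: strong convexity together with $\Vert \nabla f_i(x) \Vert \le \tilde{L}(x)$ gives $\sigma_i \Vert x-x_i^* \Vert \le \tilde{L}(x)$ for the forward inclusion, and for the converse the explicit quadratic $f_i(x) = \frac{1}{2}\hat{\sigma}_i (x-x_i^*)^T(x-x_i^*)$ with $\hat{\sigma}_i = \tilde{L}(\hat{x})/\Vert \hat{x}-x_i^* \Vert$ witnesses membership (here $\tilde{L}(\hat{x})$ is a fixed constant once the test point $\hat{x}$ is fixed, so the construction from the constant-$L$ case carries over verbatim). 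Since the last two conditions of $\hat{\mathcal{N}}$ are precisely these characterizations of $\mathcal{I}_1$ and $\mathcal{I}_2$, we get $\hat{\mathcal{N}} \subseteq \mathcal{I}_1 \cap \mathcal{I}_2 = \mathcal{I}$.

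The core of the lemma is the final claim $\hat{\mathcal{N}} \subseteq \hat{\mathcal{M}}$ on $\mathcal{C}$, and the whole argument hinges on the single observation that $\tilde{L}(x) = L - \min(\sigma_1,\sigma_2)\, d(x,\partial\mathcal{C}) \le L$ for every $x \in \mathcal{C}$, since $d(x,\partial\mathcal{C}) \ge 0$ and $\min(\sigma_1,\sigma_2) > 0$. From this, both defining constraints of $\hat{\mathcal{N}}$ are tighter than those of $\hat{\mathcal{M}}$: the ball constraints satisfy $\frac{1}{\sigma_i}\tilde{L}(x) \le \frac{L}{\sigma_i}$ directly, and for the angle constraint I would use that $\frac{\sigma_i}{\tilde{L}(x)}\Vert x-x_i^* \Vert \ge \frac{\sigma_i}{L}\Vert x-x_i^* \Vert$ together with the (decreasing) monotonicity of $\arccos$ to conclude $\tilde{\phi}_i(x,\tilde{L}) \le \tilde{\phi}_i(x,L)$, and hence $\tilde{\phi}_1(x,\tilde{L}) + \tilde{\phi}_2(x,\tilde{L}) \le \tilde{\phi}_1(x,L) + \tilde{\phi}_2(x,L)$. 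Thus a point $x \in \mathcal{C}$ lying in $\hat{\mathcal{N}}$ satisfies $\tilde{\phi}_1(x,L)+\tilde{\phi}_2(x,L) \ge \tilde{\phi}_1(x,\tilde{L})+\tilde{\phi}_2(x,\tilde{L}) \ge \psi(x)$ as well as $\Vert x-x_i^* \Vert \le \frac{L}{\sigma_i}$, so $x \in \hat{\mathcal{M}}$.

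The step requiring the most care, and the one I expect to be the main obstacle, is this angle comparison, because it presupposes that $\tilde{\phi}_i(x,\tilde{L})$ is well defined, i.e. $\frac{\sigma_i}{\tilde{L}(x)}\Vert x-x_i^* \Vert \le 1$ and $\tilde{L}(x) > 0$. I would dispatch this by noting that for $x \in \hat{\mathcal{N}}$ the ball constraint $\Vert x-x_i^* \Vert \le \frac{1}{\sigma_i}\tilde{L}(x)$ already forces $\tilde{L}(x) \ge \sigma_i \Vert x-x_i^* \Vert \ge 0$, so the argument of $\arccos$ lies in $[0,1]$, with $\tilde{L}(x)=0$ possible only in the degenerate case $x = x_1^* = x_2^*$ excluded by $r > 0$. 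Everything else is the monotonicity bookkeeping above; the inclusion $\mathcal{N}\subseteq\hat{\mathcal{N}}$ and the decoupling $\mathcal{I}=\mathcal{I}_1\cap\mathcal{I}_2$ are routine.
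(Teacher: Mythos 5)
Your proposal is correct and takes essentially the same route as the paper's (very terse) proof: the paper likewise handles the first two claims by repeating the argument of Lemma~\ref{lem:nested_sets} with $\tilde{L}(x)$ frozen at each fixed test point, and obtains $\hat{\mathcal{N}}(x_1^*,x_2^*) \subseteq \hat{\mathcal{M}}(x_1^*,x_2^*)$ from the single observation that $\tilde{L}(x) \leq L$ for all $x \in \mathcal{C}$, which your $\arccos$-monotonicity and well-definedness bookkeeping merely makes explicit. Your pointwise characterization $\mathcal{I}_i = \{x : \sigma_i \Vert x - x_i^* \Vert \leq \tilde{L}(x)\}$ via the quadratic witness is a valid (slightly sharper) rendering of what the paper leaves implicit when it remarks only that $\mathcal{I}_i$ admits no ball-shaped simplification, and it is consistent with the inclusion $\mathcal{I} \subseteq \mathcal{J}$ used later in the proof of Theorem~\ref{Thm2}.
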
 
\begin{proof}
The first and second parts are similar to the proof of Lemma~\ref{lem:nested_sets}. However, we cannot simplify the set $\mathcal{I}_i$ further (unlike the set $\mathcal{H}_i$ in Lemma~\ref{lem:nested_sets}) since $\mathcal{I}_i$ depends on the convex set $\mathcal{C}$ (via $\tilde{L}$).

Since the gradient $\tilde{L} (x)$ is no greater than $L$ for all $x \in \mathcal{C}$, the third part $\hat{\mathcal{N}}(x_1^*,x_2^*) \subseteq \hat{\mathcal{M}}(x_1^*,x_2^*)$ follows.    
\end{proof}

We can interpret Lemma~\ref{lem:nested_sets2} as follows. The constraints $\exists f_i \in \mathcal{F}(\sigma_i, L, \mathcal{C})$ for $i=1, 2$ in the set $\mathcal{N}$ are shifted to a looser constraint on their gradients, i.e., $\| \nabla f_i(x)\| \le L$ for all $x\in C$ becomes $\| \nabla f_i(x)\| \le \tilde{L}(x)$, where $\tilde{L} (x) = L - \min(\sigma_1, \sigma_2) \times d(x, \partial \mathcal{C})$. This simplifies the analysis significantly but potentially introduces conservatism. 

\begin{theorem}
If $\hat{\mathcal{M}}(x_1^*,x_2^*) \subseteq \mathcal{I}^{\circ}(x_1^*,x_2^*)$ and $r \leq \frac{L}{2} \times \min \big\{ \frac{1}{\sigma_1}, \frac{1}{\sigma_2} \big\}$, then $\partial \hat{\mathcal{N}}$ is given by $\mathcal{T}_n (\tilde{L}) \cup \{x_1^*, x_2^* \}$. \label{Thm2}
\end{theorem}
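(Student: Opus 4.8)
The plan is to mirror the proof of Theorem~\ref{Thm1}, replacing the constant bound $L$ everywhere by the location-dependent bound $\tilde{L}(x)$, and to let the hypothesis $\hat{\mathcal{M}} \subseteq \mathcal{I}^{\circ}$ play the role that Lemmas~\ref{lem: point on R_n} and~\ref{lem: compare z_1 1} played before, namely ruling out that the gradient/distance constraints are ever active on the boundary. First I would record two preliminary facts. Since $d(\cdot, \partial\mathcal{C})$ is $1$-Lipschitz, the function $\tilde{L}(x) = L - \min(\sigma_1,\sigma_2)\, d(x,\partial\mathcal{C})$ is continuous on $\mathcal{C}$, and hence so are $\tilde{\phi}_i(\cdot, \tilde{L})$ and $\psi$ wherever they are defined (that is, away from $x_1^*$ and $x_2^*$). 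Second, repeating verbatim the algebraic manipulation in the proof of Lemma~\ref{lem: T_n}, but with $\cos \tilde{\phi}_i(x,\tilde{L}) = \frac{\sigma_i}{\tilde{L}(x)} d_i$ in place of $\frac{\sigma_i}{L} d_i$, shows that $\{ x : \tilde{\phi}_1(x,\tilde{L}) + \tilde{\phi}_2(x,\tilde{L}) = \psi(x) \}$ is exactly $\mathcal{T}_n(\tilde{L})$.

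The crucial structural step is to show that the constraints $\Vert x - x_i^*\Vert \le \frac{1}{\sigma_i}\tilde{L}(x)$ (i.e.\ $x \in \mathcal{I}$) are strictly slack on the whole closure $\overline{\hat{\mathcal{N}}}$. By Lemma~\ref{lem:nested_sets2} we have $\hat{\mathcal{N}} \subseteq \hat{\mathcal{M}}$, so the hypothesis gives $\hat{\mathcal{N}} \subseteq \hat{\mathcal{M}} \subseteq \mathcal{I}^{\circ}$. Now $\hat{\mathcal{M}}$ is closed (defined by non-strict inequalities) and bounded (it lies in $\mathcal{H}$ by Lemma~\ref{lem:nested_sets}), hence compact, while $\mathbb{R}^n \setminus \mathcal{I}^{\circ}$ is closed; since the two are disjoint, they are separated by a positive distance $\delta$. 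Therefore $\overline{\hat{\mathcal{N}}} \subseteq \hat{\mathcal{M}}$ lies at distance at least $\delta$ from $\partial\mathcal{I}$, so $\cos\tilde{\phi}_i(x,\tilde{L}) < 1$ for every $x \in \overline{\hat{\mathcal{N}}}$ and the two distance constraints hold strictly throughout a neighbourhood of each such point. Consequently, for $x \in \overline{\hat{\mathcal{N}}} \setminus \{x_1^*, x_2^*\}$ the only constraint that can be active is the angle condition: if $\tilde{\phi}_1 + \tilde{\phi}_2 > \psi$ at $x$ then, by continuity, all three defining inequalities are strict on a ball about $x$, so $x \in \hat{\mathcal{N}}^{\circ}$; if $\tilde{\phi}_1 + \tilde{\phi}_2 < \psi$ at $x$ then a whole ball about $x$ lies outside $\hat{\mathcal{N}}$, so $x \notin \overline{\hat{\mathcal{N}}}$. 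Hence every boundary point other than $x_1^*, x_2^*$ satisfies $\tilde{\phi}_1 + \tilde{\phi}_2 = \psi$, i.e.\ lies on $\mathcal{T}_n(\tilde{L})$; conversely, since $\tilde{\phi}_1 + \tilde{\phi}_2 - \psi$ changes sign across this surface, every such point lying in $\mathcal{I}^{\circ}$ is approached both from $\hat{\mathcal{N}}$ and from its complement and is therefore a boundary point.

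It remains to include $x_1^*$ and $x_2^*$, for which I would reuse the perturbation argument of Lemma~\ref{lem: 1D analyze} with $\tilde{L}$ in place of $L$. The hypothesis $r \le \frac{L}{2}\min\{1/\sigma_1, 1/\sigma_2\}$ gives $r \le \frac{L}{2\sigma_1}$ and $r \le \frac{L}{2\sigma_2}$, so by Lemma~\ref{lem: 1D analyze} both $x_1^*, x_2^* \in \partial\hat{\mathcal{M}} \subseteq \hat{\mathcal{M}} \subseteq \mathcal{I}^{\circ}$; in particular each $x_i^*$ has a neighbourhood contained in $\mathcal{I}$. Taking $x_1^*$, the point $(-r+\epsilon, \mathbf{0})$ has $\alpha_1 = 0$ and $\alpha_2 = \pi$, so $\psi = 0$ and the angle condition holds automatically; for small $\epsilon$ it lies in $\mathcal{I}$, hence in $\hat{\mathcal{N}}$, so $x_1^* \in \overline{\hat{\mathcal{N}}}$. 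On the other hand $(-r-\epsilon, \mathbf{0})$ has $\psi = \pi$ but $\tilde{\phi}_1(\cdot,\tilde{L}) < \frac{\pi}{2}$, so it violates the angle condition and lies outside $\hat{\mathcal{N}}$; thus $x_1^* \notin \hat{\mathcal{N}}^{\circ}$ and $x_1^* \in \partial\hat{\mathcal{N}}$. The same argument applies to $x_2^*$. Combining the two steps yields $\partial\hat{\mathcal{N}} = \mathcal{T}_n(\tilde{L}) \cup \{x_1^*, x_2^*\}$.

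I expect the main obstacle to be the second paragraph: making rigorous that the distance constraints stay strictly slack on all of $\overline{\hat{\mathcal{N}}}$ rather than merely on $\hat{\mathcal{N}}$. Because $\tilde{L}$ varies with $x$, one cannot simply re-run the explicit $z_1$-comparisons of Theorem~\ref{Thm1}; instead the argument must lean on the compactness of $\hat{\mathcal{M}}$ together with the hypothesis $\hat{\mathcal{M}} \subseteq \mathcal{I}^{\circ}$ to produce a uniform separation from $\partial\mathcal{I}$, which is precisely what prevents a boundary point of $\hat{\mathcal{N}}$ from sitting on $\partial\mathcal{I}$. The continuity of $\tilde{L}$ (and hence of $\tilde{\phi}_i$ and $\psi$) is the other technical ingredient that must be verified, since every neighbourhood and interior argument above relies on it.
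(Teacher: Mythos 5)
Your proposal is correct and follows essentially the same route as the paper's proof: the paper likewise places $x_1^*, x_2^*$ on $\partial \hat{\mathcal{N}}$ via the one-dimensional perturbation argument of Lemma~\ref{lem: 1D analyze}, uses $\hat{\mathcal{N}} \subseteq \hat{\mathcal{M}} \subseteq \mathcal{I}^{\circ}$ (through the auxiliary set $\mathcal{J} = \{x : \Vert x - x_i^* \Vert \leq \frac{1}{\sigma_i}\tilde{L}(x), \; i = 1,2\} \supseteq \mathcal{I}$) to rule out active distance constraints on the boundary, and identifies the remaining boundary points with the angle-equality set, shown to equal $\mathcal{T}_n(\tilde{L})$ by rerunning Lemma~\ref{lem: T_n}. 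Your compactness/separation argument is a somewhat more explicit rigorization of the step the paper handles with its two-case split over $\mathcal{J}$ (note only that the slackness you need concerns $\partial \mathcal{J}$ rather than $\partial \mathcal{I}$, which is repaired by observing $\mathcal{I}^{\circ} \subseteq \mathcal{J}^{\circ}$), so the overall decomposition is identical.
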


\begin{proof}
Consider a point $x = (z_1, \mathbf{0})$ where $z_1 \in  (-r, r)$ i.e., a point in between $x_1^*$ and $x_2^*$. Then, $\alpha_1(x) = 0$ and $\alpha_2(x) = \pi$, so we get $\psi(x) = 0$. Since $x \in \hat{\mathcal{M}}(x_1^*,x_2^*) \subseteq \mathcal{I}^{\circ}(x_1^*,x_2^*)$, we have 
\begin{align*}
\cos(\phi_i(x)) &\geq \frac{ \sigma_i}{ \tilde{L} } \Vert x-x_i^* \Vert \quad \text{and} \quad \frac{ \sigma_i}{ \tilde{L} } \Vert x-x_i^* \Vert \leq 1. 
\end{align*}
We get $\tilde{\phi}_i (x,\tilde{L}) \geq 0$ for $i = \{1,2 \}$ so the angle inequality $\tilde{\phi}_1(x, \tilde{L}) + \tilde{\phi}_2(x, \tilde{L}) \geq \psi(x)$ holds. On the other hand, $x$ cannot be the new minimizer when $z_1 \in (-\infty, -r) \cup (r, \infty)$ by using similar argument in the proof of Lemma \ref{lem: 1D analyze}. So, $x_1^*$ and $x_2^*$ are included in $\partial \hat{\mathcal{N}}$.

Since $r \leq \frac{L}{2} \cdot \min \big\{ \frac{1}{\sigma_1}, \frac{1}{\sigma_2} \big\}$, from Theorem \ref{Thm1}, we get $\partial \hat{\mathcal{M}}(x_1^*,x_2^*) = \mathcal{T}_n (L) \cup \{ x_1^*, x_2^* \} \subseteq \mathcal{I}^{\circ}(x_1^*,x_2^*)$. But $x \in \partial \mathcal{I}(x_1^*,x_2^*)$ cannot be a candidate of the minimizer because $\hat{\mathcal{N}} \subseteq \hat{\mathcal{M}}$ from Lemma \ref{lem:nested_sets2}. Similar to the proof of Theorem \ref{Thm1}, the boundary $\partial \hat{\mathcal{N}}$ can be classified into 2 disjoint types. The first type consists of points $x$ with the following property:
$\tilde{\phi}_1(x, \tilde{L}) + \tilde{\phi}_2(x, \tilde{L}) = \psi(x)$ while the second type consists of points $x$ with the following property:
$\tilde{\phi}_1(x, \tilde{L}) + \tilde{\phi}_2(x, \tilde{L}) > \psi(x)$.
Note that $\alpha_1(x)$ and $\alpha_2(x)$ are not defined if $x \in \{ x^*_1,x^*_2 \}$. 

Consider the second type. We can separate it into two different cases as follows (recall that $\cos(\tilde{\phi}_i(x, \tilde{L} )) = \frac{\sigma_i}{\tilde{L}}\|x-x_i^*\|)$: \\
(i) $\cos(\tilde{\phi}_1(x, \tilde{L})) < 1$ and $\cos(\tilde{\phi}_2(x, \tilde{L}))  < 1$. \\
(ii) $\cos(\tilde{\phi}_1(x, \tilde{L})) = 1$ or $\cos(\tilde{\phi}_2(x, \tilde{L})) = 1$. 

We will argue that the point $x$ that satisfies $\{ x: \tilde{\phi}_1(x) + \tilde{\phi}_2(x) > \psi(x) \}$ cannot be in $\partial \hat{\mathcal{N}}$. First, consider the point $x$ that satisfies the first case.
Let $\mathcal{J} = \{ x: \Vert x-x_i^* \Vert \leq \frac{1}{\sigma_i} \tilde{L}(x) \quad \text{for} \quad i = 1,2 \}$. Since the condition $\nabla f_1(x_i^*)  = 0$ with $\Vert \nabla f_i(x) \Vert \leq \tilde{L} (x)$ implies $\Vert x-x_i^* \Vert \leq \frac{1}{\sigma_i} \tilde{L}(x)$, we know that $\mathcal{I} \subseteq \mathcal{J}$. Then, since $\cos(\tilde{\phi}_1(x, \tilde{L})) < 1$ and $\cos(\tilde{\phi}_2(x, \tilde{L})) < 1$, we get $x \in \mathcal{J}^\circ$. Due to the condition $\tilde{\phi}_1(x, \tilde{L}) + \tilde{\phi}_2(x, \tilde{L}) > \psi(x)$, there exists $\epsilon > 0$ such that for all $x_0 \in \mathcal{B}_{\epsilon} (x) \subset \mathcal{J}^\circ$, we have $\tilde{\phi}_1(x_0, \tilde{L}) + \tilde{\phi}_2(x_0, \tilde{L}) > \psi(x_0)$. So, $x \notin \partial \hat{\mathcal{N}}$. Next, consider the point $x$ that satisfies the second case. From the assumption that $\hat{\mathcal{M}} \subseteq \mathcal{I}^{\circ}$ and the fact that $\mathcal{I} \subseteq \mathcal{J}$, we can also conclude that $x \notin \partial \hat{\mathcal{N}}$.

The point $x \in \partial \hat{\mathcal{N}} \setminus \{x_1^*, x_2^* \}$ must satisfies $\{ x: \tilde{\phi}_1(x, \tilde{L}) + \tilde{\phi}_2(x, \tilde{L}) = \psi(x) \}$. Using the proof similar to Lemma \ref{lem: T_n}, the set $\{ x: \tilde{\phi}_1(x, \tilde{L}) + \tilde{\phi}_2(x, \tilde{L}) = \psi(x) \}$ is equivalent to $\mathcal{T}_n(\tilde{L})$ and from the argument above, we know that $\{ x^*_1,x^*_2 \} \in \partial \hat{\mathcal{N}} $. Therefore, we conclude that if $\hat{\mathcal{M}} \subseteq \mathcal{I}^{\circ}$ and $\partial \hat{\mathcal{M}} = \mathcal{T}_n \cup \{ x^*_1, x^*_2 \}$  then $\partial \hat{\mathcal{N}} = \mathcal{T}_n(\tilde{L}) \cup \{ x^*_1, x^*_2 \}$.
\end{proof}

Note that the resulting equation $\mathcal{T}_n (\tilde{L})$ may not be symmetric since $\tilde{L}$ is a function of a convex set $\mathcal{C}$. 

Examples of $\hat{\mathcal{N}}$ compared to $\hat{\mathcal{M}}$ when the convex set constraints are a circle and a box are shown in Fig. \ref{fig:convex}.

\section{Conclusions}

In this paper we studied the properties of the minimizer of the sum of strongly convex functions, in terms of the minimizers and strong convexity parameters of these functions, along with assumptions on the gradient of these functions. While identifying the region where the minimizer can lie is simple in the case of single-dimensional functions (i.e., it is given by the interval bracketed by the smallest and largest minimizers of the functions in the sum), generalizing this result to multi-dimensional functions is significantly more complicated.  Thus, we established geometric properties and necessary conditions for a given point to be a minimizer. We considered two cases: one where the gradients of the functions have to be bounded by a value $L$ at the location of the minimizer, and the other where the gradients of the functions are bounded by $L$ everywhere inside a convex set.  We used the results from the former case to provide an estimate of the region for the latter case.  The boundaries of these regions are shown in Fig.~\ref{fig:convex} (in red and dark blue).

Our work in this paper focused on identifying necessary conditions for certain points to be minimizers, and thus the regions that we have characterized are overapproximations of the true regions.  Future work will include finding sufficient conditions for given points to be a minimizers, and generalizing these regions to handle sums of multiple strongly convex functions.


\addtolength{\textheight}{-12cm}   




\bibliographystyle{IEEEtran}
\bibliography{refs,dist_opt_refs}

\end{document}